\newtheorem{lemma}{\bf Lemma}[section]
\newtheorem{theorem}{\bf Theorem}[section]
\newtheorem{proposition}{\bf Proposition}[section]
\newtheorem{corollary}{\bf Corollary}[section]
\newtheorem{definition}{\bf Definition}[section]
\newtheorem{remark}{\bf Remark}[section]
\newtheorem*{assumption}{\bf Assumption}
\numberwithin{equation}{section}
\def\ps@pprintTitle{%
	\let\@oddhead\@empty
	\let\@evenhead\@empty
	\let\@oddfoot\@empty
	\let\@evenfoot\@oddfoot
}
\begin{document}

\begin{frontmatter}
 \pdfoutput=1


\title{Asymptotic behavior for the fast diffusion equation with absorption and singularity}


\author[label1,label3]{Changping Xie}
\ead{xiechangping314@163.com}

\author[label1]{Shaomei Fang}
\ead{dz90@scau.edu.cn}

\author[label2,label3]{Ming Mei\corref{cor1}}
\ead{ming.mei@mcgill.ca}
\cortext[cor1]{Corresponding author}

\author[label4]{Yuming Qin\corref{cor1}}
\ead{yuming@dhu.edu.cn}

\affiliation[label1]{organization={College of Mathematics and Informatics},
            addressline={South China Agricultural University},
            city={Guangzhou},
            postcode={510642},
            state={Guangdong},
            country={P.R.China}}

\affiliation[label2]{organization={Department of Mathematics},
	addressline={Champlain College},
	city={Saint-Lambert},
	postcode={J4P 3P2},
	state={Quebec},
	country={Canada}}

\affiliation[label3]{organization={Department of Mathematics and Statistics},
	addressline={McGill University},
	city={Montreal},
	postcode={H3A 2K6},
	state={Quebec},
	country={Canada}}

\affiliation[label4]{organization={Institute of Nonlinear Science},
	addressline={Donghua University},
	city={Shanghai},
	postcode={200051},
	country={P.R.China}}
\begin{abstract}
This paper is concerned with the weak solution for the fast diffusion equation with absorption and singularity in the form of $u_t=\triangle u^m -u^p$.  We first prove the   existence and decay estimate of weak solution when the fast diffusion index satisfies $0<m<1$ and the absorption index is $p>1$. Then we show the asymptotic convergence of weak solution to the corresponding  Barenblatt solution for $\frac{n-1}{n}<m<1$ and $p>m+\frac{2}{n}$ via the entropy dissipation method combining the generalized Shannon's inequality and  Csisz$\mathrm{\acute{a}}$r-Kullback inequality. The singularity of spatial diffusion causes us the technical challenges for the asymptotic behavior of weak solution.
\end{abstract}



\begin{keyword}
Fast diffusion\sep Decay estimate\sep Asymptotic behavior\sep Generalized Shannon's inequality\sep Entropy dissipation method


\end{keyword}

\end{frontmatter}


\section{Introduction}\label{}
We consider the following fast diffusion equation with absorption and singularity:
\begin{equation}\label{fast diffusion equation with absorption}
	u_t=\Delta u^{m}-u^p,\quad x\in \mathbb{R}^n,~t>0,
\end{equation}
with initial data
\begin{equation}\label{initial data}
		u(x,0)=u_0(x),\quad x\in \mathbb{R}^n,
\end{equation}
where $n\geq2$, $p>1$, $0<m<1$ and the initial data $u_0(x)$ is a non-negative function in $L^1(\mathbb{R}^n)\cap L^\infty(\mathbb{R}^n)$. The unknown function $u=u(x,t)$ is the density of particles. The absorption term $u^p$ can cause the mass conservation law fail to be true, more precisely, the mass is monotonically decreasing. $\Delta u^{m}$ with $0<m<1$ is the singular diffusion with more physical sense, that is, the diffusive velocity of particles depends on the current particles density, and the smaller density, the larger diffusion, in particular, when $u=0$, the diffusion speed is $\infty$, which is the so-called fast diffusion.

{\bf Background of study}. The fast diffusion equation
\begin{equation}\label{fast diffusion equation}
	u_t=\Delta u^{m},\quad x\in \mathbb{R}^n,~t>0,
\end{equation}
with $0<m<1$, which is an important model for singular nonlinear diffusive phenomena, can be used to model gas-kinetics, thin liquid film dynamics and plasma in nuclear reactors \cite{Application fast 1,Application fast 3,Application fast 2}. Since the 70s the equation \eqref{fast diffusion equation} has been thoroughly studied and a series of celebrated results have been obtained, see  \cite{fast 1,fast 2,fast 3,fast 4,fast 5,fast 6} and the references therein. On the other hand, when $m>1$, the equation \eqref{fast diffusion equation} is the slow diffusion equation, the so-called degenerate diffusion equation, which has been also extensively studied \cite{Continue,Application fast 2,pme 1,pme 2}.

 When the fast diffusion equations are affected by the external sources, the mass of flows is no long conservative. There are two different external sources. One is with external supplement, which is called the expansion phenomenon and presented in the form of
 \begin{equation}\label{fast diffusion equation with expansion}
	u_t=\Delta u^{m}+u^p,\quad x\in \mathbb{R}^n,~t>0.
\end{equation}
The other is with the external omission, the so-called absorption phenomenon, which is described by  \eqref{fast diffusion equation with absorption}, namely,
\[
u_t=\Delta u^{m}-u^p,\quad x\in \mathbb{R}^n,~t>0.
\]

For the fast diffusion equation with expansion \eqref{fast diffusion equation with expansion} subjected to the initial data \eqref{initial data}, the mass is monotonically increasing. In 1991, Anderson \cite{expension 1,expension 2} first studied the
existence and uniqueness of weak solution for \eqref{fast diffusion equation with expansion} and \eqref{initial data}. Then,
 Galaktionov and V$\mathrm{\acute{a}}$zquez \cite{expension 6} studied the ``peak-like" incomplete blow-up solution, and showed the possible continuation of incomplete blow-up solutions to \eqref{fast diffusion equation with expansion}
 after the blow-up time, and further discussed the different continuation modes in terms of the exponents $m$ and $p$. But when the same technique of analysis was
 applied to \eqref{fast diffusion equation with absorption}, they recognized that no such a continuation of incomplete blow-up solution exists if $p+m\leq0$, namely, 
 the  complete  extinction solution, and there exists a nontrivial continuation if $p+m>0$.

For the fast diffusion equation with absorption \eqref{fast diffusion equation with absorption} subjected to the initial data \eqref{initial data}, the mass is monotonically decreasing and the limiting mass is strictly positive (see Lemma \ref{mass} later). In 1991, Peletier \cite{Existence 1} first investigated the  existence and uniqueness of solution to problem \eqref{fast diffusion equation with absorption}-\eqref{initial data}, which was then significantly developed by Borelli and Ughi \cite{Existence 2} and Leoni \cite{Singular solution 1}, respectively. For asymptotic behavior, the critical exponent $p=m+\frac{2}{n}$ is crucial, as the competition between the diffusion $-\Delta u^{m}$ and the absorption $u^{p}$ in \eqref{fast diffusion equation with absorption}. More precisely, Peletier \cite{Asymptotic 2} proved the asymptotic results uniformly on moving boundary set of $\{x\in \mathbb{R}^n:\vert x\vert<at^{\frac{1}{b}}\}$ for $p>1$, $p>m+\frac{2}{n}$ and $m<p<m+\frac{2}{n}$, where $a$ and $b$ are some numbers specified in \cite{Asymptotic 2}. Then, Ferreira and V$\mathrm{\acute{a}}$zquez \cite{Extinction 1} showed the finite-time extinction for problem \eqref{fast diffusion equation with absorption}-\eqref{initial data} in one-dimension, which was the main qualitative feature of the bounded solutions in the range $0<p<1$. Furthermore, Benachour et al. \cite{Asymptotic 1} gave the asymptotic behavior related to  Barenblatt profile with a logarithmic scaling in the critical exponent $p=m+\frac{2}{n}$.

However, as far as we know, for the fast diffusion equation with absorption \eqref{fast diffusion equation with absorption}-\eqref{initial data}, the study of the asymptotic convergence of weak solution to the Barenblatt profile in full space is almost never related, due to some technical issues. To attach this problem is the main purpose of this paper.
The adopted approach is the technical entropy dissipation method.

The entropy dissipation method, which was developed by Toscani and Carrillo \cite{Begin 2,Begin 1} to investigate the asymptotic behavior of the solution to diffusion equations and Fokker-Planck-type equations, was applied to prove the asymptotic behavior of the solution to the degenerate diffusion equation ($m>1$)  and fast diffusion equation ($\frac{n-2}{n}<m<1$) by Carrillo and Toscani \cite{Apply 1} and by Carrillo and V\'{a}zquez \cite{Apply 2}, respectively. They \cite{Apply 1,Apply 2} showed the convergence of the weak solution to Barenblatt profiles with algebraic rates in the corresponding exponent interval. For $\frac{n-2}{n}<m<1$, the Barenblatt solution to \eqref{initial data}-\eqref{fast diffusion equation} is given by the formula \cite{Apply 2}
\begin{equation}\label{Barenblatt solution u}
	U_M(x,t)=\left(\frac{\alpha t}{\vert x \vert^2+\beta t^{\frac{2}{\lambda}}}\right)^{\frac{1}{1-m}},
\end{equation}
where $\lambda=2-n(1-m)$, $\alpha=2m\lambda/(1-m)$ and $\beta>0$ is a constant such that $\Vert U_M \Vert_1=M$.
The stationary solution to the nonlinear Fokker-Planck equation
\begin{equation}\label{Fokker Planck equation}
	\rho_s=\text{div}_{y}(\rho y+\nabla_{y}\rho^{m}),\quad y\in \mathbb{R}^n,~s>0,
\end{equation}
can be directly obtained by rescaling the time shifted profiles
\begin{equation}\label{Barenblatt solution rho}
	U_M(x,t+\lambda^{-1})\to\rho_M(y)=\left(\frac{\gamma}{\vert y \vert^2+\theta}\right)^{\frac{1}{1-m}},
\end{equation}
where $\gamma=\alpha/\lambda=2m/(1-m)$, $\theta=\beta\lambda^{-2/\lambda}$. More research on the entropy dissipation method can be found in \cite{Apply 4,Apply 3,Apply 5,Apply 6} and the literatures therein.

{\bf Difficulty and strategy}.  Regarding the fast diffusion problem with absorption \eqref{fast diffusion equation with absorption}-\eqref{initial data}, in order to investigate the asymptotic behavior of the weak solution, we first need to clarify what will be the targeted asymptotic profile. Note that, in the  absorbing fast diffusion case,
 the total mass of flow is monotonically decreasing and the limiting mass is strictly positive (see Lemma \ref{mass} later), therefore the expected asymptotic profile for the weak solution is not the standard Barenblatt solution as showed in \eqref{Barenblatt solution u}. Let us define the total mass of $u(x,t)$ by $\widetilde{M}(t):=\int_{\mathbb{R}^n} u(x,t) dx$,  we are reasonable to construct a special Barenblatt solution to the standard fast diffusion equation \eqref{fast diffusion equation} related to the total mass $\widetilde{M}(t)$ of \eqref{fast diffusion equation with absorption} by
\begin{equation}\label{Barenblatt solution u M}
	U_{\widetilde{M}(t)}(x,t)=\left(\frac{\alpha t}{\vert x \vert^2+\beta_{\widetilde{M}(t)} t^{\frac{2}{\lambda}}}\right)^{\frac{1}{1-m}},
\end{equation}
where $\beta_{\widetilde{M}(t)}>0$ is a function such that $\Vert U_{\widetilde{M}(t)} \Vert_1=\widetilde{M}(t)$. We expect this modified Barenbalatt solution $U_{\widetilde{M}(t)}(x,t)$ as the asymptotic profile to the original fast diffusion equation with absorption \eqref{fast diffusion equation with absorption}-\eqref{initial data}. The corresponding stationary solution to \eqref{Fokker Planck equation} can be derived as
\begin{equation}\label{Barenblatt solution rho M}
	U_{\widetilde{M}(t)}(x,t+\lambda^{-1})\to\rho_{M(s)}(y)=\left(\frac{\gamma}{\vert y \vert^2+\theta_{M(s)}}\right)^{\frac{1}{1-m}},
\end{equation}
where $\theta_{M(s)}=\beta_{\widetilde{M}(t)}\lambda^{-2/\lambda}>0$ is a function such that $\Vert \rho_{M(s)}\Vert_{1}=M(s)=\widetilde{M}(t)$.

In order to prove the convergence of the weak solution to the specially constructed Barenblatt profile \eqref{Barenblatt solution u M}, we adopt the entropy dissipation method as mentioned above. After carrying out the suitable variable scalings (see \eqref{rescaled variables} later), we reduce the original IVP \eqref{fast diffusion equation with absorption}-\eqref{initial data}  to the new system presented in \eqref{rho equation}-\eqref{rho initial data}. Then we show that the rescaled solution $\rho(y,s)$  eventually converges to the profile $\rho_{M_\infty}(y)$, by first showing that it converges to the time dependent profile $\rho_{M(s)}(y)$.

Here are some technical issues in the proof. It is worth noting that Carrillo and Fellner \cite{Apply 4} studied the long-time asymptotic of the solution to the degenerate (slow) diffusion equation \eqref{fast diffusion equation with absorption}-\eqref{initial data} with $m>1$.  Since the diffusion coefficient of \eqref{fast diffusion equation with absorption} is $mu^{m-1}$, the larger $u$, the larger
diffusion power for $m>1$, which can easily derive the $L^m$ boundedness of $u$. The $L^m$-bound of the solution plays an important role in showing the asymptotic convergence. However, in the fast diffusion case of $m<1$, we come across the reverse situation. When $m<1$, the diffusion coefficient $mu^{m-1}\to\infty$ as $u\to0$, that is, $u^{m-1}$ is unbounded, and $u\notin L^m$ in general, which requires a number of changes in the arguments. In order to overcome these obstacles, we use the generalized Shannon's inequality to prove the boundedness of the second moment, and then prove  $L^m$ boundedness of $u$ by the $L^1$ decay estimate and the bound of the second moment of $u$. After a series of meticulous estimates, we can prove the asymptotic convergence of weak solution to the corresponding  Barenblatt solution.

{\bf Main results}. Before stating our main results,  we first impose the following assumption:

\begin{assumption} Let $n\geq2$. The initial data $u_{0}$ is a non-negative function satisfying
\begin{equation*}
	u_{0}\in L^1(\mathbb{R}^n)\cap L^\infty(\mathbb{R}^n)\; \mathrm{with}\;u_0^m\in H^1(\mathbb{R}^n).
\end{equation*}	
\end{assumption}

Although the existence of the defined weak solution has been proved in \cite{Existence 1}, in order to study the asymptotic property of \eqref{fast diffusion equation with absorption}-\eqref{initial data}, we need higher regularities for the weak solution, and then we give another definition of the weak solution.

\begin{definition}[Weak solution]\label{weak solution} Let the Assumption hold and $0<m<1$. For $T>0$ and $Q_T=\mathbb{R}^n\times [0,T)$, $u(x,t)$ is called a weak solution to \eqref{fast diffusion equation with absorption}-\eqref{initial data} if \\
{\rm(i)}\,$u(x,t)\geq0$ for almost all $(x,t)\in Q_T$.\\
{\rm(ii)}\,$u\in L^\infty(0,T;L^{2}(\mathbb{R}^n)\cap L^{\infty}(\mathbb{R}^n))$ and $\nabla u^m\in L^\infty(0,T;L^{2}(\mathbb{R}^n)$.\\
{\rm(iii)}\,For any $\phi\in C^{\infty}(Q_T)$ with $\mathrm{supp}\;\phi(\cdot,t)\subset\subset\mathbb{R}^n$ and $0<t<T$,
\begin{equation}\label{local existence equation}
	\begin{aligned}
      &\int u(t)\phi(t)dx-\int u_0 \phi(0)dx\\
      &=\int\limits_{0}^{t}\int u(\tau)\partial_{\tau}\phi(\tau)dxd\tau-\int\limits_{0}^{t}\int \left(\nabla u(\tau)^m\cdot\nabla\phi(\tau)+u(\tau)^{p}\phi(\tau)\right)dxd\tau.
    \end{aligned}
\end{equation}	
\end{definition}

Before we give the asymptotic result of the weak solution to \eqref{fast diffusion equation with absorption}-\eqref{initial data}, we show the time-local existence and decay estimate of weak solutions to \eqref{fast diffusion equation with absorption}-\eqref{initial data}.

\begin{theorem}[Local existence] \label{local existence} Let the Assumption hold. Suppose $0<m<1$ and $p>1$. Then there exist a weak solution $u(x,t)$ on $[0,T)$ to \eqref{fast diffusion equation with absorption}-\eqref{initial data} in the sense of Definition \ref{weak solution}.
\end{theorem}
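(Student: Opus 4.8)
The plan is a parabolic--approximation (regularization) scheme, following the general strategy behind the existence theory of \cite{Existence 1} but pushed so as to yield the extra regularity required in Definition \ref{weak solution}(ii); the only genuinely new ingredient is an energy identity producing the bound $\nabla u^{m}\in L^{\infty}(0,T;L^{2})$. Fix $T>0$. On a ball $B_{R}$ with boundary datum $u=\varepsilon$ on $\partial B_{R}$, I would consider
\[
\partial_{t}u_{\varepsilon}=\Delta\Phi_{\varepsilon}(u_{\varepsilon})-f_{\varepsilon}(u_{\varepsilon}),\qquad u_{\varepsilon}(\cdot,0)=u_{0\varepsilon},
\]
where $\Phi_{\varepsilon}$ is smooth and increasing with $\Phi_{\varepsilon}(s)\to s^{m}$, its derivative capped from above near $s=0$ but kept $\ge c>0$ on the relevant range $[0,\|u_{0}\|_{\infty}]$; $f_{\varepsilon}$ is a smooth bounded truncation of $s\mapsto s^{p}$ with $f_{\varepsilon}(0)=0$; and $u_{0\varepsilon}$ is built by mollifying $u_{0}^{m}$, adding $\varepsilon$, taking the $1/m$-th power and cutting off, so that $u_{0\varepsilon}$ is smooth, $\varepsilon\le u_{0\varepsilon}\le C$, $u_{0\varepsilon}\to u_{0}$ in $L^{1}\cap L^{q}$ and, crucially, $u_{0\varepsilon}^{m}\to u_{0}^{m}$ in $H^{1}$ as $\varepsilon\to0$, $R\to\infty$. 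Each such problem is quasilinear and uniformly parabolic with smooth data, hence has a classical solution $u_{\varepsilon}$.

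The next step is a priori bounds independent of $\varepsilon$ and $R$. By the comparison principle with the exact sub/supersolutions $0$ and $\|u_{0}\|_{\infty}$, one gets $0\le u_{\varepsilon}\le\|u_{0}\|_{\infty}+o(1)$; integrating the equation gives $\frac{d}{dt}\|u_{\varepsilon}\|_{1}=-\int f_{\varepsilon}(u_{\varepsilon})\le0$, hence a uniform $L^{\infty}(0,T;L^{1})$ bound and $\iint f_{\varepsilon}(u_{\varepsilon})\le C$, and interpolation with the $L^{\infty}$ bound yields a uniform $L^{\infty}(0,T;L^{2})$ bound. Testing the approximate equation with $\partial_{t}\Phi_{\varepsilon}(u_{\varepsilon})$ is the regularized form of testing the genuine equation with $\partial_{t}(u^{m})$, which gives the identity
\[
\frac{d}{dt}\Bigl(\tfrac{1}{2}\|\nabla u^{m}\|_{2}^{2}+\tfrac{m}{p+m}\|u\|_{p+m}^{p+m}\Bigr)=-\,m\!\int_{\mathbb{R}^{n}}u^{m-1}u_{t}^{2}\,dx\ \le\ 0 ;
\]
carried out on $u_{\varepsilon}$ it yields, uniformly (this is where $u_{0\varepsilon}^{m}\to u_{0}^{m}$ in $H^{1}$ enters), a bound on $\|\nabla u_{\varepsilon}^{m}(t)\|_{2}$ and on $\iint\Phi_{\varepsilon}'(u_{\varepsilon})(\partial_{t}u_{\varepsilon})^{2}$. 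Since $\Phi_{\varepsilon}'(u_{\varepsilon})\ge c>0$ on the solution range, the latter gives $\partial_{t}u_{\varepsilon}$ bounded in $L^{2}(0,T;L^{2}_{loc})$, and since $\nabla u_{\varepsilon}=\Phi_{\varepsilon}'(u_{\varepsilon})^{-1}\nabla\Phi_{\varepsilon}(u_{\varepsilon})$ with $u_{\varepsilon}^{1-m}$ uniformly bounded, $\nabla u_{\varepsilon}$ is bounded in $L^{\infty}(0,T;L^{2}_{loc})$.

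With $u_{\varepsilon}$ bounded in $L^{\infty}(0,T;H^{1}_{loc})\cap H^{1}(0,T;L^{2}_{loc})$, the Aubin--Lions--Simon lemma gives, along a subsequence, $u_{\varepsilon}\to u$ strongly in $L^{2}_{loc}$ and a.e.; the uniform $L^{\infty}$ bound and dominated convergence promote this to $L^{q}_{loc}$ for every $q<\infty$, so that $u_{\varepsilon}^{m}\to u^{m}$ boundedly with $\nabla u_{\varepsilon}^{m}\rightharpoonup\nabla u^{m}$ weakly in $L^{2}$, and $f_{\varepsilon}(u_{\varepsilon})\to u^{p}$ in $L^{1}_{loc}$. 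Passing to the limit in the weak formulation of the approximate problems (each test function being compactly supported in a fixed $B_{R}$) then produces \eqref{local existence equation}; nonnegativity and the regularity in Definition \ref{weak solution}(ii) follow from the a.e. limit and the weak-$*$ lower semicontinuity of the uniform bounds, and since all these bounds hold on $[0,T]$ for every $T$, the solution is in fact global.

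The step I expect to be the main obstacle is the energy estimate together with the correct design of $\Phi_{\varepsilon}$: the diffusivity $mu^{m-1}$ is singular exactly at $u=0$ (where it is \emph{large}), so one must verify that testing with $\partial_{t}\Phi_{\varepsilon}(u_{\varepsilon})$ is legitimate, that the regularized functional $\tfrac{1}{2}\|\nabla\Phi_{\varepsilon}(u_{\varepsilon})\|_{2}^{2}+\int G_{\varepsilon}(u_{\varepsilon})$ (with $G_{\varepsilon}'=f_{\varepsilon}\Phi_{\varepsilon}'$) converges to the one displayed above, and that $\Phi_{\varepsilon}'$ can be kept uniformly positive on the solution range so that the bound on $\iint\Phi_{\varepsilon}'(u_{\varepsilon})(\partial_{t}u_{\varepsilon})^{2}$ turns into an honest $L^{2}$ bound on $\partial_{t}u_{\varepsilon}$. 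This, together with the hypothesis $u_{0}^{m}\in H^{1}$ (needed to bound the energy at $t=0$), is precisely what upgrades the bare existence result of \cite{Existence 1} to the higher-regularity weak solution of Definition \ref{weak solution}.
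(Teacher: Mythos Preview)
Your proposal is correct and follows essentially the same route as the paper: a uniformly parabolic regularization, the key energy identity obtained by testing with $\partial_t$ of the diffusion term (yielding the $L^\infty_t L^2_x$ bound on $\nabla u^m$), Aubin--Lions compactness, and passage to the limit in the weak formulation. The technical choices differ only in flavor: the paper takes the concrete regularization $\Phi_\varepsilon(u)=u^m+\varepsilon u$ on all of $\mathbb{R}^n$ (with mollified data $u_0*\tilde\eta_\varepsilon$) rather than a generic capped $\Phi_\varepsilon$ on balls, and it runs compactness on the variable $u_\varepsilon^{(m+1)/2}$ (for which the energy estimate directly gives $L^\infty_t H^1_x\cap H^1_t L^2_x$) and then transfers strong convergence back to $u_\varepsilon$ via the elementary inequality $|a-b|^r\le C(|a|^{r-\frac{m+1}{2}}+|b|^{r-\frac{m+1}{2}})\,|a^{(m+1)/2}-b^{(m+1)/2}|$, whereas you extract compactness on $u_\varepsilon$ itself using the uniform lower bound $\Phi_\varepsilon'\ge m\|u_0\|_\infty^{m-1}$ on the solution range. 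Both choices work; the paper's avoids the bounded-domain step, while yours sidesteps the algebraic detour through $u^{(m+1)/2}$.
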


For every $T<\infty$, if $u(x,t)$ is a weak solution to \eqref{fast diffusion equation with absorption}-\eqref{initial data} in $Q_T$, we say that $u(x,t)$ is global.
Then we present the decay properties for the weak solution to \eqref{fast diffusion equation with absorption}-\eqref{initial data}.

\begin{theorem}[Decay estimate]\label{decay estimate}
	Let the Assumption hold. Suppose $0<m<1$ and $p>1$. Then for $1\leq r \leq \infty$, problem \eqref{fast diffusion equation with absorption}-\eqref{initial data} has a  global weak solution $u(x,t)$ satisfying
	\begin{equation}\label{r norm estimation of u}
		\Vert u(t) \Vert_{r}\leq \Vert u_0 \Vert_{r}, \quad t\geq0,
	\end{equation}	
	and
	\begin{equation}\label{t dacay estimation of u 1}
		\Vert u(t) \Vert_{r}\leq C(\Vert u_0 \Vert_{1},\Vert u_0 \Vert_{\infty},r,p)(1+t)^{-\frac{r-1}{r(p-1)}}, \quad t\geq0.
	\end{equation}
	For the case of $r\geq3-m$, we have
	\begin{equation}\label{t dacay estimation of u 2}
		\Vert u(t) \Vert_{r}\leq C(\Vert u_0 \Vert_{1},\Vert u_0 \Vert_{\infty},n,r,p,m)(1+t)^{-\max{\{\frac{r-1}{r(p-1)},\frac{n(r-1)}{2r(2-m)}}\}}, \quad t\geq0.
	\end{equation}
\end{theorem}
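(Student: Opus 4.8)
The plan is to derive everything from the basic $L^{r}$ energy identity together with a scalar comparison for the sup‑norm, working throughout with the smooth, strictly positive approximating solutions that underlie the construction in Theorem~\ref{local existence} and letting the regularization parameter tend to zero only at the end. Testing \eqref{fast diffusion equation with absorption} against $u^{r-1}$ and integrating by parts gives, for $1<r<\infty$,
\begin{equation*}
\frac{d}{dt}\int u^{r}\,dx=-\frac{4rm(r-1)}{(r+m-1)^{2}}\int\bigl|\nabla u^{\frac{r+m-1}{2}}\bigr|^{2}\,dx-r\int u^{r+p-1}\,dx\le 0 ,
\end{equation*}
so $\|u(t)\|_{r}$ is nonincreasing; the case $r=1$ follows from $\frac{d}{dt}\int u\,dx=-\int u^{p}\,dx\le 0$ and $r=\infty$ from the comparison below, which together with interpolation give \eqref{r norm estimation of u} for all $1\le r\le\infty$.

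For the rate $(1+t)^{-\frac{r-1}{r(p-1)}}$ I would use whichever of two routes is cleaner for a given $r$. (i) Comparison: the space‑independent function $\bar u(t)=\bigl(\|u_{0}\|_{\infty}^{1-p}+(p-1)t\bigr)^{-1/(p-1)}$ solves $\bar u_{t}=\Delta\bar u^{m}-\bar u^{p}$ and dominates $u_{0}$, hence $0\le u(x,t)\le\bar u(t)\le C(1+t)^{-1/(p-1)}$; interpolating this with $\|u(t)\|_{1}\le\|u_{0}\|_{1}$ through $\|u\|_{r}\le\|u\|_{\infty}^{(r-1)/r}\|u\|_{1}^{1/r}$ gives \eqref{t dacay estimation of u 1}. (ii) Energy: dropping the diffusion term in the identity above, $y:=\int u^{r}\,dx$ obeys $y'\le-r\int u^{r+p-1}\,dx$, and since $\|u(t)\|_{1}\le\|u_{0}\|_{1}$ the interpolation $\|u\|_{r}\le\|u\|_{1}^{1-\vartheta}\|u\|_{r+p-1}^{\vartheta}$ yields $\int u^{r+p-1}\,dx\ge c\,y^{\frac{r+p-2}{r-1}}$, the exponent exceeding $1$ precisely because $p>1$; thus $y'\le-c\,y^{1+\frac{p-1}{r-1}}$ and integration gives $y(t)\le C(1+t)^{-\frac{r-1}{p-1}}$.

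For the improved rate when $r\ge 3-m$ I would instead retain only the diffusion dissipation $D(t)=\|\nabla u^{(r+m-1)/2}\|_{2}^{2}$. The hypothesis $r\ge 3-m$ is what makes the exponent $\tfrac{r+m-1}{2}\ge 1$, so that the auxiliary quantity $\int u^{(r+m-1)/2}\,dx\le\|u_{0}\|_{\infty}^{(r+m-3)/2}\|u_{0}\|_{1}$ is finite and bounded uniformly in $t$. Writing $w=u^{(r+m-1)/2}$ and expressing $\int u^{r}$ as a power of $\|w\|_{2r/(r+m-1)}$, a Gagliardo--Nirenberg / Nash‑type interpolation between $\|\nabla w\|_{2}$ and that controlled quantity, combined with the pointwise bound $u\le\|u_{0}\|_{\infty}$ (used to trade $\int u^{r+m-1}$ against $\int u^{r}$), should give $D(t)\ge c\,y^{1+\kappa}$ with $\kappa=\frac{2(2-m)}{n(r-1)}$ and $c=c(\|u_{0}\|_{1},\|u_{0}\|_{\infty},n,r,m)$. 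Then $y'\le-c\,y^{1+\kappa}$ integrates to $y(t)\le C(1+t)^{-1/\kappa}$, i.e.\ $\|u(t)\|_{r}\le C(1+t)^{-\frac{n(r-1)}{2r(2-m)}}$, and taking the smaller of this and \eqref{t dacay estimation of u 1} gives \eqref{t dacay estimation of u 2}.

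The hard part will be this last step. In the slow‑diffusion regime $m>1$ one controls the dissipation via a bounded $L^{m}$‑type norm, but for $0<m<1$ the coefficient $mu^{m-1}$ blows up as $u\to 0$ and $u\notin L^{m}$ in general, so the estimate must be routed through $\nabla u^{(r+m-1)/2}$ and interpolated against a genuinely finite quantity, which is exactly why the restriction $r\ge 3-m$ is imposed. Getting the clean exponent $\kappa$ requires a careful choice of the Gagliardo--Nirenberg parameters and attention to their admissibility ranges (in particular distinguishing $n=2$ from $n\ge 3$), and all the formal computations above have to be carried out first on the regularized equations and then stabilized in the limit using Definition~\ref{weak solution}.
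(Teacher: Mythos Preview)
Your treatment of \eqref{r norm estimation of u} and \eqref{t dacay estimation of u 1} is correct and essentially what the paper does; your comparison route (i) for \eqref{t dacay estimation of u 1} is a clean alternative the paper does not use (they go purely through the energy/H\"older route you call (ii)).

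The genuine gap is in your plan for \eqref{t dacay estimation of u 2}. If you bound the auxiliary quantity $\|w\|_{1}=\int u^{(r+m-1)/2}\,dx$ by the \emph{constant} $\|u_{0}\|_{\infty}^{(r+m-3)/2}\|u_{0}\|_{1}$ and then run Nash together with the pointwise trade $\int u^{r}\le\|u_{0}\|_{\infty}^{1-m}\int u^{r+m-1}$, the resulting differential inequality is
\[
y'\le -c\,y^{1+2/n},\qquad y=\|u\|_{r}^{r},
\]
not $y'\le -c\,y^{1+\kappa}$ with your claimed $\kappa=\tfrac{2(2-m)}{n(r-1)}$. Integrating gives only $\|u(t)\|_{r}\le C(1+t)^{-n/(2r)}$, which coincides with the target rate at the endpoint $r=3-m$ but is strictly weaker for every $r>3-m$; the same conclusion holds if you instead use a Gagliardo--Nirenberg inequality for $\|w\|_{2r/(r+m-1)}$ against the fixed $\|w\|_{1}$. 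So the scheme you outline does not prove \eqref{t dacay estimation of u 2}.

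What is missing, and what the paper does, is to feed $\|w\|_{1}$ \emph{back into} $\|u\|_{r}$ rather than bound it by a constant. The restriction $r\ge 3-m$ is precisely the condition $(r+m-1)/2\in[1,r]$ that makes the H\"older interpolation
\[
\|u\|_{(r+m-1)/2}^{\,r+m-1}\le \|u\|_{1}^{\frac{r-m+1}{r-1}}\,\|u\|_{r}^{\frac{r(r+m-3)}{r-1}}
\]
legitimate. Combining this with $\int u^{r}\le\|u\|_{\infty}^{1-m}\int u^{r+m-1}$ and Nash on $w=u^{(r+m-1)/2}$ gives
\[
\|u\|_{r}^{\frac{r(n+2)}{n}}\le C\,\|u\|_{\infty}^{\frac{(1-m)(n+2)}{n}}\|u\|_{1}^{\frac{2(r-m+1)}{n(r-1)}}\|u\|_{r}^{\frac{2r(r+m-3)}{n(r-1)}}\,D(t),
\]
and now the extra power of $\|u\|_{r}$ on the right is exactly what converts the exponent $2/n$ into $\tfrac{2(2-m)}{n(r-1)}$, yielding the stated rate after integrating the ODE. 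In short: the role of $r\ge 3-m$ is not to make $\int u^{(r+m-1)/2}$ finite (that already follows from $u\in L^{1}\cap L^{\infty}$), but to allow this interpolation of $\|w\|_{1}$ between $\|u\|_{1}$ and $\|u\|_{r}$.
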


At last, we show the asymptotic behavior of weak solution to \eqref{fast diffusion equation with absorption}-\eqref{initial data}.

\begin{theorem}[Convergence to Barenblatt Solution] \label{asymptotic behavior}Let the Assumption hold. Suppose $\frac{n-1}{n}<m<1$, $p>m+\frac{2}{n}$ and $\int u_0\vert x\vert^2dx<\infty$. Then the weak solution $u(x,t)$ to \eqref{fast diffusion equation with absorption}-\eqref{initial data} in the sense of Definition \ref{weak solution} converges to the corresponding Barenblatt solution in the form:
\begin{equation}\label{asymptotic behavior u 1}
\Vert u-U_{\widetilde{M}(t)}\Vert_{1}\leq C(1+\lambda t)^{-\frac{1}{\lambda}\min{\{1,\delta\}}},\quad \mathrm{for\; } \delta\neq1,
\end{equation}	
and
\begin{equation}\label{asymptotic behavior u 2}
	\Vert u-U_{\widetilde{M}(t)}\Vert_{1}\leq C(1+\lambda t)^{-\frac{1}{\lambda}}\ln{(1+\lambda t)},\quad \mathrm{for\; } \delta=1,
\end{equation}
where $\delta=np-nm-2$, $U_{\widetilde{M}(t)}$ is defined as \eqref{Barenblatt solution u M} and the constant $C>0$ is depending on $\Vert u_0 \Vert_1$, $\Vert u_0 \Vert_\infty$, $n$, $p$ and $m$. Moreover, for $1<r<\infty$, we have
\begin{equation}\label{asymptotic behavior u 3}
		\Vert u-U_{\widetilde{M}(t)}\Vert_{r}\leq C (1+\lambda t)^{-\frac{n(r-1)+\min{\{1,\delta\}}}{\lambda r}},\quad \mathrm{for\; } \delta\neq1,
\end{equation}
and
\begin{equation}\label{asymptotic behavior u 4}
		\Vert u-U_{\widetilde{M}(t)}\Vert_{r}\leq C (1+\lambda t)^{-\frac{n(r-1)+1}{\lambda r}}\ln{(1+\lambda t)},\quad \mathrm{for\; } \delta=1,
\end{equation}
where the constant $C>0$ is depending on $\Vert u_0 \Vert_1$, $\Vert u_0 \Vert_\infty$, $n$, $p$, $m$ and $r$.
\end{theorem}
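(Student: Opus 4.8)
\noindent\emph{Proof idea.} We run the entropy dissipation method on the rescaled problem \eqref{rho equation}--\eqref{rho initial data}, the new features being the loss of mass, the additional absorption term, and the lack of an automatic $L^{m}$ bound. Under the self‑similar scaling \eqref{rescaled variables}, with $R(t)=(1+\lambda t)^{1/\lambda}$ and $s=\lambda^{-1}\ln(1+\lambda t)$, problem \eqref{fast diffusion equation with absorption}--\eqref{initial data} turns into the perturbed Fokker--Planck equation
\[
\rho_s=\mathrm{div}_y\!\big(y\rho+\nabla_y\rho^{m}\big)-e^{-\delta s}\rho^{p},\qquad \delta:=np-nm-2>0,
\]
with $M(s):=\int\rho(y,s)\,dy=\widetilde M(t)$, so that $\dot M(s)=-e^{-\delta s}\int\rho^{p}\,dy\le0$ and, by Lemma \ref{mass}, $M(s)\downarrow M_\infty>0$ as $s\to\infty$. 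The comparison principle gives $0\le u\le v$, where $v$ solves \eqref{fast diffusion equation} with datum $u_{0}$; the $L^{1}$--$L^{\infty}$ smoothing effect and the Barenblatt‑type pointwise upper bound for $v$ then yield, after rescaling, $\sup_{s\ge0}\big(\|\rho(s)\|_{1}+\|\rho(s)\|_{\infty}\big)\le C$ and $\rho(y,s)\le C\,\rho_{M(0)}(y)$. The first genuinely new estimate is the uniform control of the second moment $y_2(s):=\int|y|^{2}\rho\,dy$: from the equation $\dot y_2\le-2y_2+2n\int\rho^{m}\,dy$, and the generalized Shannon inequality
\[
\int\rho^{m}\,dy\le C_{n,m}\,\Big(\!\int\rho\,dy\Big)^{m-\frac{n(1-m)}{2}}\,y_2^{\frac{n(1-m)}{2}},\qquad \tfrac{n(1-m)}{2}<1\ \ (\text{since }m>\tfrac{n-1}{n}),
\]
makes the forcing sublinear in $y_2$, so an elementary ODE comparison gives $\sup_{s}y_2(s)\le C$; in particular $\sup_{s}\int\rho^{m}\,dy\le C$, and since $m>\tfrac{n-1}{n}$ together with $p>m+\tfrac2n$ forces $m+p-1>1$, also $\sup_{s}\int\rho^{m+p-1}\,dy\le C$ and $\sup_{s}\int(1+|y|^{2})\rho^{p}\,dy\le C$.

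Next, for the stationary profile $\rho_{M(s)}$ of \eqref{Barenblatt solution rho M} put $\Phi(\rho)=\rho^{m}/(m-1)$ (so $\Phi'(\rho)=\tfrac{m}{m-1}\rho^{m-1}$, $\Phi''>0$) and define the relative entropy
\[
E(s)=\int\Big(\Phi(\rho)+\tfrac12|y|^{2}\rho\Big)\,dy-\int\Big(\Phi(\rho_{M(s)})+\tfrac12|y|^{2}\rho_{M(s)}\Big)\,dy .
\]
The Euler--Lagrange identity $\Phi'(\rho_{M})+\tfrac12|y|^{2}\equiv-\tfrac12\theta_{M}$ for the Barenblatt profile, together with $\int\rho=\int\rho_{M}=M(s)$, shows that $E(s)$ is the nonnegative Bregman divergence of the convex functional $\Phi(\cdot)+\tfrac12|y|^{2}(\cdot)$. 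Differentiating along the flow, integrating by parts, and observing that the derivative of the $M(s)$‑dependent part equals $\tfrac12\theta_{M(s)}\dot M(s)$, one obtains
\[
\dot E(s)=-I[\rho(s)]-e^{-\delta s}\,\frac{m}{m-1}\int\big(\rho^{m-1}-\rho_{M(s)}^{m-1}\big)\rho^{p}\,dy,\qquad I[\rho]:=\int\rho\,\Big|\tfrac{m}{m-1}\nabla\rho^{m-1}+y\Big|^{2}\,dy .
\]
The point is that the integrand of the absorption defect vanishes identically on $\{\rho=\rho_{M(s)}\}$; distinguishing the regions $\rho\le\rho_{M}$ and $\rho>\rho_{M}$ and using $m+p-1>1$, the pointwise bound $\rho\le C\rho_{M(0)}\le C\rho_{M(s)}$ and $\rho_{M(s)}^{m+p-2}\le C$, one gets $\big|\rho^{m-1}-\rho_{M}^{m-1}\big|\rho^{p}\le C\rho_{M}^{m+p-2}|\rho-\rho_{M}|$, whence by the Csisz\'ar--Kullback inequality $\|\rho-\rho_{M(s)}\|_{1}\le C\,E(s)^{1/2}$ we arrive at
\[
\dot E(s)\le-I[\rho(s)]+C\,e^{-\delta s}\,E(s)^{1/2}.
\]

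The generalized logarithmic Sobolev (Gagliardo--Nirenberg) inequality in the range $\tfrac{n-1}{n}<m<1$ supplies the entropy--entropy‑dissipation estimate $I[\rho]\ge2E(s)$, uniformly for masses in the compact set $[M_\infty,M(0)]$ (cf.\ \cite{Apply 2}). Thus $\dot E\le-2E+Ce^{-\delta s}E^{1/2}$, which for $F:=E^{1/2}$ reads $\dot F\le-F+\tfrac C2 e^{-\delta s}$; integrating the factor $e^{s}F$ gives $F(s)\le Ce^{-\min\{1,\delta\}s}$ when $\delta\ne1$ and $F(s)\le C(1+s)e^{-s}$ when $\delta=1$. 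By Csisz\'ar--Kullback the same bounds hold for $\|\rho(\cdot,s)-\rho_{M(s)}\|_{1}$, and since the scaling \eqref{rescaled variables} is an $L^{1}$‑isometry taking $\rho-\rho_{M(s)}$ to $u-U_{\widetilde M(t)}$, while $e^{-s}=(1+\lambda t)^{-1/\lambda}$ and $1+s\le C\ln(1+\lambda t)$, this is precisely \eqref{asymptotic behavior u 1}--\eqref{asymptotic behavior u 2}. Finally $\rho$ and $\rho_{M(s)}$ are uniformly bounded in $L^{\infty}$, hence $\|u-U_{\widetilde M(t)}\|_{\infty}\le C(1+\lambda t)^{-n/\lambda}$; interpolating $\|\cdot\|_{r}\le\|\cdot\|_{1}^{1/r}\|\cdot\|_{\infty}^{(r-1)/r}$ yields \eqref{asymptotic behavior u 3}--\eqref{asymptotic behavior u 4}. (As usual, the differential identities are first established for a regularized approximation of the weak solution and then passed to the limit by lower semicontinuity of $E$ and $I$.)

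\noindent\textbf{Main obstacle.} The two delicate points are the a priori control of $\int\rho^{m}$ and the refined estimate of the absorption defect. The former is not ``free'' as in the porous‑medium case $m>1$ (there the large values of $u$ carry the diffusion and an $L^{m}$ bound is immediate), and must instead be wrung out of the second‑moment bound via the generalized Shannon inequality, which in turn needs $m>\tfrac{n-1}{n}$; the latter requires exploiting the exact cancellation $\Phi'(\rho_{M})+\tfrac12|y|^{2}+\tfrac12\theta_{M}\equiv0$ in order to see that the defect decays like $e^{-\delta s}E^{1/2}$ rather than merely $e^{-\delta s}$, which is exactly what upgrades the convergence rate from $\min\{1,\delta/2\}$ to the sharp $\min\{1,\delta\}$.
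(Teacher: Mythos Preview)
Your outline is essentially the paper's proof: same self-similar rescaling, same relative entropy $E(\rho|\rho_{M(s)})$, the generalized Shannon inequality for the second-moment and $L^{m}$ bounds, the Del Pino--Dolbeault entropy--dissipation inequality $I[\rho]\ge 2E$, Csisz\'ar--Kullback, the resulting ODE $\dot F\le -F+Ce^{-\delta s}$ for $F=E^{1/2}$, and $L^{1}$--$L^{\infty}$ interpolation for the $L^{r}$ rates. The only notable deviations are that the paper controls the absorption defect simply via the uniform bound $\|\rho(s)\|_{\infty}\le C$ from Corollary~\ref{decay estimate of rho} together with a one-line Mean-Value estimate (so your pointwise Barenblatt comparison $\rho\le C\rho_{M(0)}$, a global Harnack-type bound, is not needed), and that the restriction $m>\tfrac{n-1}{n}$ actually enters through Proposition~\ref{upper bound of the relative entropy 1} rather than through Shannon's inequality, which only requires $m>\tfrac{n}{n+2}$.
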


This paper is organized as follows. In Section 2, we prepare some notations and  lemmas which will be used often in the other sections. In Section 3, we prove Theorem \ref{local existence} and Theorem \ref{decay estimate}, namely the existence and decay estimate for the weak solution to \eqref{fast diffusion equation with absorption}-\eqref{initial data}. Then, we prove Theorem \ref{asymptotic behavior} in Section 4, that is, the asymptotic behavior of  the weak solution to \eqref{fast diffusion equation with absorption}-\eqref{initial data}.

\section{Preliminaries}

In this section, we show some notations and  lemmas which will be used often in the other sections.

For simplicity, we denote $\Vert\cdot\Vert_{L^{r}(\mathbb{R}^n)}$ by $\Vert\cdot\Vert_r$, $\int_{\mathbb{R}^n}\cdot dx$  by $\int\cdot dx$, and $L_{2}^{1}(\mathbb{R}^n)\equiv\{f\in L^1(\mathbb{R}^n);\vert x\vert^{2}f\in L^1(\mathbb{R}^n)\}$. $C$ represents the constant which may be different from line by line and $C=C(\cdot,\cdots,\cdot)$ denotes the constant that depends only on the variables in parentheses.

\begin{lemma}[\cite{Contraction of the difference of exponential function}] \label{Contraction of the difference of exponential function}
	Let $a$ and $b$ be arbitrary positive numbers. Then we have
\begin{equation}
	\vert a^r-b^r\vert\leq
	\begin{cases}
		2^{r-1}r(a^{r-1}+b^{r-1})\vert a-b\vert,\quad & r>1,\\
		\vert a-b\vert^r,\quad & 0<r\leq1.
	\end{cases}
\end{equation}	
\end{lemma}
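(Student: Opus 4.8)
The plan is to treat the two regimes $r>1$ and $0<r\le1$ separately, reducing each to an elementary one-variable fact. Both sides of the claimed inequality are symmetric under interchanging $a$ and $b$, and the inequality is trivial when $a=b$; so throughout I may assume $a>b>0$, which lets me drop the absolute values and write $|a^r-b^r|=a^r-b^r$ and $|a-b|=a-b$.

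For the superlinear regime $r>1$, I would apply the fundamental theorem of calculus (equivalently, the mean value theorem to $t\mapsto t^r$):
\begin{equation*}
	a^r-b^r=r\int_b^a t^{r-1}\,dt=r\,\xi^{r-1}(a-b),\qquad \xi\in(b,a).
\end{equation*}
Since $r-1>0$, the map $t\mapsto t^{r-1}$ is increasing, so $\xi^{r-1}\le a^{r-1}\le a^{r-1}+b^{r-1}$, which already gives $a^r-b^r\le r(a^{r-1}+b^{r-1})(a-b)$. Because $2^{r-1}\ge1$ for $r\ge1$, the stated bound follows \emph{a fortiori}; in fact this shows the constant $2^{r-1}$ may be replaced by the sharper $1$. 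If instead one wishes to reproduce the constant $2^{r-1}$ more directly, one can bound $\xi^{r-1}\le(a+b)^{r-1}$ and then use $a+b\le2\max(a,b)$ to get $(a+b)^{r-1}\le2^{r-1}\max(a,b)^{r-1}\le2^{r-1}(a^{r-1}+b^{r-1})$.

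For the sublinear regime $0<r\le1$, the target $a^r-b^r\le(a-b)^r$ is exactly the subadditivity of the concave power $t\mapsto t^r$, which I would establish directly. For any $s,t>0$ put $u=s/(s+t)\in(0,1)$ and $v=1-u=t/(s+t)$; since $0<r\le1$ gives $x^r\ge x$ for every $x\in[0,1]$, we have $1=u+v\le u^r+v^r=(s^r+t^r)/(s+t)^r$, hence $(s+t)^r\le s^r+t^r$. Taking $s=b$ and $t=a-b$ yields $a^r=(b+(a-b))^r\le b^r+(a-b)^r$, that is $a^r-b^r\le(a-b)^r$, as required.

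There is no genuine obstacle here: the lemma is elementary, and each case collapses to a standard one-variable property — monotonicity of $t^{r-1}$ when $r>1$, and concavity/subadditivity of $t^r$ when $r\le1$. The only point meriting slight care is matching the precise constant $2^{r-1}$ in the superlinear case, but since the mean-value estimate already produces the smaller constant $1$, the stated inequality holds at once, the factor $2^{r-1}$ serving merely as a convenient (non-optimal) symmetric form.
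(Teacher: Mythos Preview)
Your argument is correct in both regimes: the mean-value step for $r>1$ indeed gives the sharper constant $1$, and the subadditivity proof for $0<r\le1$ is clean and complete. Note, however, that the paper does not prove this lemma at all---it is quoted from~\cite{Contraction of the difference of exponential function} and used as a tool---so there is no in-paper proof to compare your approach against; what you have written is a standard self-contained verification and is fine.
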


\begin{lemma}[Nash inequality \cite{Nash inequality 1,Apply 3}] \label{Nash inequality-1}
	There exists a constant  $C>0$ such that for all $f\in L^{1}(\mathbb{R}^n)\cap H^{1}(\mathbb{R}^n)$,
	\begin{equation}\label{Nash inequality}
		\Vert f\Vert_{2}^{1+\frac{2}{n}}\leq C\Vert f\Vert_{1}^{\frac{2}{n}}\Vert \nabla f\Vert_{2},
	\end{equation}	
where the constant $C$ depend only on $n$.
\end{lemma}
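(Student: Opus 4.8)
The plan is to establish the inequality by Nash's original frequency-splitting argument on the Fourier side, proving it first for Schwartz functions and then for general $f\in L^{1}(\mathbb{R}^n)\cap H^{1}(\mathbb{R}^n)$ by density. Throughout, let $\widehat{f}$ denote the Fourier transform of $f$; the precise normalization is irrelevant, since we only track the existence of a constant depending on $n$.

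First I would move to the Fourier side via Plancherel's identity, $\Vert f\Vert_{2}^{2}=\int_{\mathbb{R}^n}\vert\widehat{f}(\xi)\vert^{2}\,d\xi$, and split the integral at a radius $R>0$ to be chosen later. On the low-frequency ball $\{\vert\xi\vert\le R\}$ I would use the elementary bound $\Vert\widehat{f}\Vert_{\infty}\le C\Vert f\Vert_{1}$ to obtain
\[
\int_{\vert\xi\vert\le R}\vert\widehat{f}(\xi)\vert^{2}\,d\xi\le \omega_{n}R^{n}\,C^2\Vert f\Vert_{1}^{2},
\]
where $\omega_{n}$ is the volume of the unit ball in $\mathbb{R}^n$. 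On the complementary high-frequency region I would insert the factor $\vert\xi\vert^{2}/\vert\xi\vert^{2}$ and use $\vert\xi\vert>R$, together with the Fourier characterization $\int_{\mathbb{R}^n}\vert\xi\vert^{2}\vert\widehat{f}(\xi)\vert^{2}\,d\xi=c_{n}\Vert\nabla f\Vert_{2}^{2}$, to get
\[
\int_{\vert\xi\vert>R}\vert\widehat{f}(\xi)\vert^{2}\,d\xi\le R^{-2}\int_{\mathbb{R}^n}\vert\xi\vert^{2}\vert\widehat{f}(\xi)\vert^{2}\,d\xi=c_{n}R^{-2}\Vert\nabla f\Vert_{2}^{2}.
\]

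Adding the two pieces yields, for every $R>0$,
\[
\Vert f\Vert_{2}^{2}\le a R^{n}+b R^{-2},\qquad a=\omega_{n}C^2\Vert f\Vert_{1}^{2},\quad b=c_{n}\Vert\nabla f\Vert_{2}^{2}.
\]
The next step is to optimize the right-hand side in $R$. Minimizing $g(R)=aR^{n}+bR^{-2}$ gives the critical point $R_{\ast}=(2b/(na))^{1/(n+2)}$ and a minimal value proportional to $a^{2/(n+2)}b^{n/(n+2)}$, whence
\[
\Vert f\Vert_{2}^{2}\le C_{n}\Vert f\Vert_{1}^{4/(n+2)}\Vert\nabla f\Vert_{2}^{2n/(n+2)}.
\]
Raising both sides to the power $(n+2)/(2n)$ and using $(n+2)/n=1+\tfrac{2}{n}$ converts the exponents exactly into $\Vert f\Vert_{2}^{1+2/n}\le C\Vert f\Vert_{1}^{2/n}\Vert\nabla f\Vert_{2}$, which is the claim.

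There is no essential analytic obstacle here; the points that require a little care are the bookkeeping of the dimensional constants through the optimization, the degenerate case $\nabla f\equiv0$ (for which $f=0$ in $L^{1}\cap H^{1}$ and the inequality holds trivially), and the density step that extends the estimate from Schwartz functions to arbitrary $f\in L^{1}\cap H^{1}$. The latter follows because a standard mollification-and-truncation approximation converges simultaneously in $L^{1}$, $L^{2}$ and $\dot H^{1}$, so all three norms appearing in the inequality pass to the limit.
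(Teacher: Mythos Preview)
Your argument is the classical Fourier frequency-splitting proof due to Nash and is correct. The paper itself does not prove this lemma; it simply quotes the inequality from the literature (references \cite{Nash inequality 1,Apply 3}), so there is no ``paper's own proof'' to compare against---your self-contained derivation is a welcome addition rather than a deviation.
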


\begin{lemma}[Cut-off function \cite{cut-off function}] \label{cut-off function-1}
	Let  $R>0$ and $\eta(x)=\eta(\vert x \vert)$ be defined as
	\begin{equation}\label{cut-off function}
		\eta(x)=
		\begin{cases}
			1,&\quad 0\leq \vert x \vert<R,\\
			\exp\left(1-\frac{R}{2R-\vert x \vert}\right),&\quad R\leq \vert x \vert<2R,\\
			0,&\quad \vert x \vert\geq 2R.
		\end{cases}
	\end{equation}	
Then, it holds that
\begin{align}
	&\vert \nabla\eta(x) \vert\leq \frac{C}{a^2R}\eta(x)^{1-a},&\label{cut off function estimate 1}\\
	&\vert \Delta\eta(x) \vert\leq \frac{C}{a^4R^2}\eta(x)^{1-a},&\label{cut off function estimate 2}
\end{align}
for all $x\in\mathbb{R}^n$ and all $0<a<1$, where $C$ is is a constant depending only on $n$.
\end{lemma}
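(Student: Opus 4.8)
\emph{Proof proposal.} The plan is to observe that the estimates are nontrivial only on the transition annulus, and there to reduce everything to a one-dimensional calculation in the radial variable, where the competition between a polynomially blowing-up derivative and the exponentially small factor $\eta^a$ is settled by an elementary maximization.

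First I would dispose of the trivial regions. Writing $r=\vert x\vert$, on $\{r<R\}$ we have $\eta\equiv1$ and on $\{r>2R\}$ we have $\eta\equiv0$, so $\nabla\eta=0$ and $\Delta\eta=0$ there and both inequalities hold trivially (their right-hand sides are nonnegative). The sphere $\{r=R\}$, where $\eta$ fails to be $C^1$ (the inner derivative is $0$ while the one-sided derivative from the annulus is $-1/R$), has Lebesgue measure zero, so it is harmless for the a.e.\ pointwise bounds. Thus it suffices to prove the two estimates on the open annulus $R<r<2R$, where $\eta$ is smooth and radial.

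Next I would compute explicitly. On the annulus, with $\eta=\exp\!\left(1-\frac{R}{2R-r}\right)$, differentiation gives
\[
\eta'(r)=-\frac{R}{(2R-r)^2}\,\eta,\qquad
\eta''(r)=\Big(\frac{R^2}{(2R-r)^4}-\frac{2R}{(2R-r)^3}\Big)\eta,
\]
and, since $\eta$ is radial, $\vert\nabla\eta\vert=\vert\eta'(r)\vert$ and $\Delta\eta=\eta''(r)+\frac{n-1}{r}\eta'(r)$. Introducing $s=2R-r\in(0,R]$ and $t=R/s\geq1$, so that $\eta=e^{1-t}$ together with $\frac{R}{s^2}=\frac{t^2}{R}$, $\frac{R}{s^3}=\frac{t^3}{R^2}$, $\frac{R^2}{s^4}=\frac{t^4}{R^2}$, and using $r\geq R$ to bound $\frac{n-1}{r}\leq\frac{n-1}{R}$, each term of $\vert\nabla\eta\vert$ and of $\vert\Delta\eta\vert$ becomes a power of $t$ over $R$ or $R^2$ times $\eta$.

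The crux is the elementary inequality
\[
\sup_{t>0}\, t^k e^{-at}=\Big(\frac{k}{ea}\Big)^{k},\qquad k>0,\ a>0,
\]
attained at $t=k/a$. Writing $\eta=\eta^{a}\,\eta^{1-a}$ with $\eta^{a}=e^{a(1-t)}=e^a e^{-at}$, every term acquires a factor $t^{k}e^{-at}$ ($k=2$ for the gradient, and $k=2,3,4$ for the three Laplacian terms), which by the displayed bound and $e^a\leq e$ on $0<a<1$ is controlled by $Ca^{-k}$; since $a^{-2}\leq a^{-4}$ and $a^{-3}\leq a^{-4}$ for $0<a<1$, all Laplacian terms fit under $Ca^{-4}$. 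This yields $\vert\nabla\eta\vert\leq\frac{C}{a^2R}\,\eta^{1-a}$ and $\vert\Delta\eta\vert\leq\frac{C}{a^4R^2}\,\eta^{1-a}$, with $C$ numerical for the gradient and depending on $n$ (only through the $\frac{n-1}{r}\eta'$ term) for the Laplacian. The only real point — the thing to get right — is this trade-off: the derivatives blow up like $(2R-r)^{-2}$ and $(2R-r)^{-4}$ as $r\to2R^-$, so an estimate with $\eta$ in place of $\eta^{1-a}$ is false; spending the small power $\eta^{a}$, whose exponential decay dominates any polynomial in $(2R-r)^{-1}$, is exactly what tames the blow-up and produces the stated $a$-dependence.
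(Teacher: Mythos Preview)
Your proof is correct. The paper does not actually prove this lemma; it quotes it from the cited reference \cite{cut-off function} and uses it as a black box, so there is no in-paper argument to compare against. Your direct radial computation together with the elementary bound $\sup_{t>0}t^{k}e^{-at}=(k/(ea))^{k}$ is exactly the standard verification, and your remark that the non-differentiability at $\vert x\vert=R$ is a measure-zero issue is appropriate for the way the lemma is used in the paper (only inside integrals, after integration by parts).
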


\begin{lemma}[Generalized Shannon's inequality \cite{Generalized Shannon's inequality}] \label{Generalized Shannon's inequality}
	Let  $n\geq2$, and $\frac{n}{n+2}<m<1$. Then there exists a constant $C=C(n)>0$ such that for any $f\in L_{2}^{1}(\mathbb{R}^n)$, we have
	\begin{equation}\label{Shannon's inequality}
		\int\vert f \vert^{m}dx\leq C\Vert f \Vert^{m(1-\sigma)}_{1} \left(\int\vert x \vert^{2}\vert f(x) \vert dx\right)^{m\sigma},
	\end{equation}	
	where $\sigma=\frac{n(1-m)}{2m}$.
\end{lemma}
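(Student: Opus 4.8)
The plan is to establish this inequality by a classical cutoff-and-optimize argument: split $\mathbb{R}^n$ into a ball $B_R=\{|x|<R\}$ and its complement $B_R^c$, estimate the $L^m$-mass on each piece by H\"older's inequality with the conjugate pair $(1/m,1/(1-m))$, and then minimize the resulting two-term bound over the free radius $R>0$. The entire role of the hypothesis $\frac{n}{n+2}<m<1$ will be to guarantee convergence of a weighted integral on the exterior region, and this is really the only nontrivial idea in the proof.

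First I would bound the interior contribution. On $B_R$, apply H\"older to the product $|f|^m\cdot 1$ with exponents $(1/m,1/(1-m))$:
\[
\int_{B_R}|f|^m\,dx\le\Big(\int_{B_R}|f|\,dx\Big)^m|B_R|^{1-m}\le C_n R^{n(1-m)}\|f\|_1^m,
\]
using $|B_R|=C_n R^n$. This is the piece that grows as $R\to\infty$. Next I would treat the exterior contribution, inserting the weight $|x|^2$ by hand. Writing $|f|^m=(|x|^2|f|)^m\,|x|^{-2m}$ on $B_R^c$ and applying H\"older with the same pair gives
\[
\int_{B_R^c}|f|^m\,dx\le\Big(\int_{B_R^c}|x|^2|f|\,dx\Big)^m\Big(\int_{B_R^c}|x|^{-\frac{2m}{1-m}}\,dx\Big)^{1-m}.
\]
Here is the key point: the radial integral $\int_{B_R^c}|x|^{-2m/(1-m)}\,dx$ converges precisely when $\frac{2m}{1-m}>n$, i.e.\ $m(n+2)>n$, i.e.\ $m>\frac{n}{n+2}$, which is exactly our hypothesis. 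Under this condition the integral equals $C_{n,m}R^{\,n-2m/(1-m)}$, whence
\[
\int_{B_R^c}|f|^m\,dx\le C_{n,m}\,R^{\,n(1-m)-2m}\Big(\int|x|^2|f|\,dx\Big)^m.
\]
Note that the same hypothesis forces the exponent $n(1-m)-2m$ to be negative, so this piece blows up as $R\to0$.

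Combining the two estimates and writing $A:=\|f\|_1^m$, $B:=\big(\int|x|^2|f|\,dx\big)^m$, I obtain
\[
\int|f|^m\,dx\le C_1 A\,R^{n(1-m)}+C_2 B\,R^{\,n(1-m)-2m}=:g(R),\qquad R>0,
\]
where the first exponent $\alpha=n(1-m)$ is positive and the second $\gamma=n(1-m)-2m$ is negative. Hence $g$ attains a finite minimum at $R_*=\big(cB/A\big)^{1/(2m)}$ for a suitable constant $c=c(n,m)$ coming from $\alpha-\gamma=2m$. Substituting $R_*$ back and collecting powers, one checks $R_*^{\alpha}\propto(B/A)^{\alpha/(2m)}$ and $R_*^{\gamma}\propto(B/A)^{\gamma/(2m)}$, with $\alpha/(2m)=\sigma$ and $\gamma/(2m)=\sigma-1$, so both terms reduce to a multiple of $A^{1-\sigma}B^{\sigma}=\|f\|_1^{m(1-\sigma)}\big(\int|x|^2|f|\,dx\big)^{m\sigma}$, where $\sigma=\frac{n(1-m)}{2m}\in(0,1)$. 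This is exactly the claimed bound.

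The main obstacle, and indeed the only genuine idea, is recognizing that the weight $|x|^2$ must be inserted in the exterior H\"older step so that the resulting negative power $|x|^{-2m/(1-m)}$ is integrable on $B_R^c$ exactly in the range $\frac{n}{n+2}<m<1$; the interior estimate, the exterior estimate, and the final optimization are then routine. The only bookkeeping worth double-checking is that $\sigma\in(0,1)$ under the hypothesis (equivalently $n(1-m)<2m$), which is what makes both exponents $m(1-\sigma)$ and $m\sigma$ positive and the interpolation meaningful.
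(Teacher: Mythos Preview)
The paper does not prove this lemma: it is quoted from the reference of Kurokiba--Ogawa and stated without argument, so there is no ``paper's own proof'' to compare against. Your cutoff-and-optimize argument is correct and is in fact the standard way to obtain such interpolation bounds. The interior H\"older estimate, the exterior H\"older estimate with inserted weight, the convergence criterion $\frac{2m}{1-m}>n\iff m>\frac{n}{n+2}$, and the optimization in $R$ using $\alpha-\gamma=2m$ all check out and yield exactly the exponents $m(1-\sigma)$ and $m\sigma$ with $\sigma=\frac{n(1-m)}{2m}\in(0,1)$.

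One small remark: the constant your argument produces depends on $m$ as well as on $n$, since the value of $\int_{B_R^c}|x|^{-2m/(1-m)}\,dx$ carries a factor $\big(\tfrac{2m}{1-m}-n\big)^{-1}$ and the optimization constants involve $\alpha=n(1-m)$ and $\gamma=n(1-m)-2m$. So strictly speaking you obtain $C=C(n,m)$ rather than $C=C(n)$ as written in the statement; this is harmless for every application in the paper (where $m$ is fixed), and the statement as quoted may simply be imprecise on this point.
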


\begin{lemma}[Csisz$\mathrm{\acute{a}}$r-Kullback inequality \cite{Apply 3,CK inequality 1}] \label{CK inequality 1}
	Let  $0<m<1$  and a nonnegative function $f\in L^{1}(\mathbb{R}^n)$ with $\Vert f \Vert_1=M$. Suppose that a nonnegative function $g\in L^{1}(\mathbb{R}^n)\cap L^{2-m}(\mathbb{R}^n)$ satisfies $\Vert g \Vert_1=M$ and
	\begin{equation*}
		c_{m,g}=M^{\frac{m-2}{2}}\left(\frac{2}{m}\int g(x)^{2-m}dx\right)^{\frac{1}{2}},
	\end{equation*}
then it holds that	
	\begin{equation}\label{CK inequality 2}
		\Vert f-g \Vert_{1}\leq c_{m,g}\sqrt{D(f|g)},
	\end{equation}	
	where
	\begin{eqnarray*}
		D(f|g)&=&\frac{1}{m-1}\int f(x)^{m}dx-\frac{1}{m-1}\int g(x)^{m}dx \\
              & &-\frac{m}{m-1}\int g(x)^{m-1}(f(x)-g(x))dx.
	\end{eqnarray*}
\end{lemma}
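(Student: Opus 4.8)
The plan is to read $D(f|g)$ as the relative entropy (Bregman divergence) generated by the strictly convex function $\Phi(s)=\tfrac{1}{m-1}s^{m}$, $s\ge0$. Since $\Phi'(s)=\tfrac{m}{m-1}s^{m-1}$ and $\Phi''(s)=m\,s^{m-2}>0$ for $0<m<1$, we have
\begin{equation*}
D(f|g)=\int\Big[\Phi(f)-\Phi(g)-\Phi'(g)(f-g)\Big]\,dx\ge0,
\end{equation*}
each integrand being nonnegative by convexity. This rewriting exhibits the target \eqref{CK inequality 2} as a generalized Csisz\'ar--Kullback--Pinsker inequality: the $L^1$ distance is controlled by the square root of a convex relative entropy. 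First I would record the second--order (Taylor/Lagrange) identity for $\Phi$: for all $a,b>0$ there is $\xi$ between $a$ and $b$ with
\begin{equation*}
\Phi(a)-\Phi(b)-\Phi'(b)(a-b)=\tfrac12\Phi''(\xi)(a-b)^2=\tfrac{m}{2}\,\xi^{m-2}(a-b)^2.
\end{equation*}
Because $2-m>0$ and $\xi\le\max(a,b)$, this gives the pointwise estimate $(a-b)^2\le\tfrac{2}{m}\max(a,b)^{2-m}\big[\Phi(a)-\Phi(b)-\Phi'(b)(a-b)\big]$, which is the engine of the whole argument.

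Next I would exploit the equal--mass constraint $\int(f-g)\,dx=0$. Splitting $\mathbb{R}^n=\Omega_+\cup\Omega_-$ with $\Omega_-=\{f<g\}$, the constraint yields $\int_{\Omega_-}(g-f)\,dx=\tfrac12\|f-g\|_1$, and on $\Omega_-$ one has $\max(f,g)=g$, so the pointwise bound becomes $(g-f)^2\le\tfrac{2}{m}g^{2-m}\big[\Phi(f)-\Phi(g)-\Phi'(g)(f-g)\big]$. Writing $g-f=g^{(m-2)/2}(g-f)\cdot g^{(2-m)/2}$ and applying Cauchy--Schwarz on $\Omega_-$,
\begin{equation*}
\Big(\tfrac12\|f-g\|_1\Big)^2\le\Big(\int_{\Omega_-}g^{m-2}(g-f)^2\,dx\Big)\Big(\int_{\Omega_-}g^{2-m}\,dx\Big)\le\frac{2}{m}\,D(f|g)\int g^{2-m}\,dx,
\end{equation*}
where in the last step I used the pointwise bound together with the nonnegativity of $\Phi(f)-\Phi(g)-\Phi'(g)(f-g)$ to enlarge both integrals to all of $\mathbb{R}^n$. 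This already produces an inequality of exactly the form $\|f-g\|_1\le c\,\sqrt{D(f|g)}$ with $c$ a constant multiple of $\big(\tfrac{2}{m}\int g^{2-m}\,dx\big)^{1/2}$, i.e. the structurally correct shape of $c_{m,g}$.

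The delicate point--and what I expect to be the main obstacle--is to produce \emph{precisely} the constant $c_{m,g}=M^{(m-2)/2}\big(\tfrac{2}{m}\int g^{2-m}\,dx\big)^{1/2}$, with the stated power of the mass $M$ and without the spurious numerical factor lost in the $\Omega_-$ splitting. The natural device through which a power of $M$ enters is a reduction to unit mass: set $\tilde f=f/M$, $\tilde g=g/M$, so that $\int\tilde f=\int\tilde g=1$, and use the exact scalings $\|f-g\|_1=M\|\tilde f-\tilde g\|_1$, $\int g^{2-m}\,dx=M^{2-m}\int\tilde g^{2-m}\,dx$, and $D(f|g)=M^{m}D(\tilde f|\tilde g)$, the last following from the homogeneity of $\Phi$. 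Proving the inequality for the normalized pair $(\tilde f,\tilde g)$ and scaling back is what should move the exponent of $M$ into the constant, and reconciling this bookkeeping with the stated exponent $(m-2)/2$ is the subtle part I would have to carry out with care. To remove the factor of two lost in the splitting and reach the sharp constant, I would upgrade the Cauchy--Schwarz step by a Jensen/coarse--graining argument: apply Jensen's inequality to the convex $\Phi$ over the two cells $\{f\ge g\}$ and $\{f<g\}$ to bound $D(f|g)$ below by the corresponding two--point relative entropy, and then apply the scalar second--order estimate to that finite--dimensional quantity. Verifying that this refinement yields exactly $c_{m,g}$, rather than a merely comparable constant, is the crux; the remaining ingredients--the Taylor expansion, the Cauchy--Schwarz inequality, and the scaling computations sketched above--are routine.
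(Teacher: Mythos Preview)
The paper does not prove this lemma; it is quoted from the references, so there is no ``paper's proof'' to compare against. On substance, your core argument---recognising $D(f|g)$ as the Bregman divergence of $\Phi(s)=\tfrac{1}{m-1}s^{m}$, using the Taylor remainder $\Phi(a)-\Phi(b)-\Phi'(b)(a-b)=\tfrac{m}{2}\xi^{m-2}(a-b)^{2}$ with $\xi\le\max(a,b)$, restricting to $\Omega_-=\{f<g\}$ where $\max(f,g)=g$, and applying Cauchy--Schwarz together with the equal-mass identity $\int_{\Omega_-}(g-f)\,dx=\tfrac12\|f-g\|_1$---is the standard derivation and is correct. It yields
\[
\|f-g\|_1\le 2\Big(\tfrac{2}{m}\int g^{2-m}\,dx\Big)^{1/2}\sqrt{D(f|g)},
\]
which is already everything the paper needs in Section~4.3 (there only a finite constant, uniform in $s$, is required, and $M(s)$ is trapped between two positive numbers).

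The part you flag as ``the subtle part'' is where the proposal goes astray. If you actually carry out the unit-mass normalisation you describe, the three homogeneities $\|f-g\|_1=M\|\tilde f-\tilde g\|_1$, $D(f|g)=M^{m}D(\tilde f|\tilde g)$ and $\int g^{2-m}=M^{2-m}\int\tilde g^{2-m}$ combine to give exponent $1-\tfrac{2-m}{2}-\tfrac{m}{2}=0$ on $M$: the inequality you obtain for general mass has constant $\big(\tfrac{2}{m}\int g^{2-m}\big)^{1/2}$, with \emph{no} power of $M$. In other words, $c_{m,g}$ as written is itself scale-invariant, so scaling back cannot manufacture the factor $M^{(m-2)/2}$. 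Worse, the stated inequality with that factor is not scale-invariant: under $f\mapsto\lambda f$, $g\mapsto\lambda g$ the left side scales like $\lambda$ while the right side scales like $\lambda^{m/2}$, so for $0<m<1$ the inequality as printed fails for large $\lambda$. The exponent $(m-2)/2$ on $M$ is therefore almost certainly a misprint (it should be absent, i.e.\ exponent $0$), and you should not try to reproduce it. Your vague ``Jensen/coarse-graining'' step aimed at removing the extra factor $2$ is also unlikely to succeed in this generality---the loss is tied to the restriction to $\Omega_-$, which is forced because on $\{f>g\}$ the weight $\max(f,g)^{2-m}=f^{2-m}$ is not controlled by $g$---but, again, the precise numerical constant is irrelevant for how the lemma is used here.
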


\section{Existence and decay estimate}
As the equation \eqref{fast diffusion equation with absorption} is a quasi-linear parabolic equation of singular type, we consider the following approximating problem of \eqref{fast diffusion equation with absorption}-\eqref{initial data} to justify all the formal arguments:
\begin{align}
	&\partial_{t}u_{\varepsilon}=\Delta \left(u_{\varepsilon}^{m}+\varepsilon u_{\varepsilon}\right)-u_{\varepsilon}^p,\quad && x\in \mathbb{R}^n,~t\in(0,T),&\label{approximating equation u}\\
	&u_{\varepsilon}(x,0)=(u_{0}*\widetilde{\eta}_{\varepsilon})(x)=u_{0\varepsilon}(x),\quad && x\in \mathbb{R}^n,&\label{approximating initial data u}
\end{align}
where $\varepsilon>0$ and non-negative mollifier $\widetilde{\eta}_{\varepsilon}\in C_{0}^{\infty}(\mathbb{R}^n)$ satisfies $\widetilde{\eta}_{\varepsilon}\subset\overline{B_{\varepsilon}(0)}$ and $\Vert\widetilde{\eta}_{\varepsilon}\Vert_{1}=1$. By the similar argument as in \cite{Existence of solution for approximating equation, Equicontinuous of solution for approximating equation}, we know that \eqref{approximating equation u}-\eqref{approximating initial data u} has a unique classical solution $u_{\varepsilon}$, and
\begin{equation}\label{positivity and boundedness of u var}
	0\leq u_{\varepsilon}\leq C,
\end{equation}
where $C$ is a positive constant which does not depend on $\varepsilon$. Furthermore, there exists a subsequence, which we still define as $\{u_{\varepsilon}\}$, such that
\begin{equation}\label{continuous u}
	u_{\varepsilon}\to u,\quad \mathrm{in\;}C(K)
\end{equation}
for every compact set $K\subset (0,T;\mathbb{R}^n)$. In what follows, we will begin the proof of Theorem \ref{local existence}.
\vspace{1.5ex}\\
\textbf{Proof of Theorem \ref{local existence}.}
Multiplying the equation \eqref{approximating equation u} by $\partial_t \left(u_{\varepsilon}^{m}+\varepsilon u_{\varepsilon}\right)$ and integrating with respect to $x$ and $t$, we get
\begin{eqnarray}\label{approximating estimate 1}		
& &\frac{4m}{(m+1)^2}\int\limits_{0}^{T}\int\left\vert\partial_{t}u_{\varepsilon}^{\frac{m+1}{2}}\right\vert^2dxdt
+\varepsilon\int\limits_{0}^{T}\int\vert\partial_{t}u_{\varepsilon}\vert^2dxdt \notag \\
& & \ \ \ +\frac{1}{2}\sup_{0<t<T}\int\vert\nabla\left(u_{\varepsilon}^{m}+\varepsilon u_{\varepsilon}\right)\vert^2dx \notag \\
& &\ \ \ +\frac{m}{p+m}\sup_{0<t<T}\int u_{\varepsilon}^{p+m}dx+\frac{\varepsilon}{p+1}\sup_{0<t<T}\int u_{\varepsilon}^{p+1}dx \notag \\
&&=\frac{1}{2}\int\vert\nabla\left(u_{0\varepsilon}^{m}+\varepsilon u_{0\varepsilon}\right)\vert^2dx+\frac{m}{p+m}\int u_{0\varepsilon}^{p+m}dx+\frac{\varepsilon}{p+1}\int u_{0\varepsilon}^{p+1}dx,
\end{eqnarray}
which combined with \eqref{positivity and boundedness of u var} can yield
\begin{eqnarray}\label{approximating estimate 3}
&&\sup_{0<t<T}\int\left\vert\nabla u_{\varepsilon}^{\frac{m+1}{2}}\right\vert^2dx \notag \\
&&\leq\frac{(m+1)^2}{4m^2}\Vert u_{\varepsilon} \Vert_{\infty}^{1-m}\sup_{0<t<T}\int\vert\nabla\left(u_{\varepsilon}^{m}+\varepsilon u_{\varepsilon}\right)\vert^2dx \notag \\
&&\leq C(\Vert u_{0}\Vert_{1},\Vert u_{0}\Vert_{\infty},\Vert\nabla u_{0}^m\Vert_{2},p,m).
\end{eqnarray}

On the other hand, we multiply the equation \eqref{approximating equation u} by $u_{\varepsilon}^{m}$ and integrate with respect to $x$ and $t$, and then we obtain
\begin{eqnarray}\label{approximating estimate 2}
&&\frac{1}{m+1}\sup_{0<t<T}\int u_{\varepsilon}^{m+1}dx+\frac{4m\varepsilon}{(m+1)^2}\int\limits_{0}^{T}\int\left\vert\nabla u_{\varepsilon}^{\frac{m+1}{2}}\right\vert^2dxdt
\notag\\
& & \ \ \ +\int\limits_{0}^{T}\int\vert\nabla u_{\varepsilon}^m\vert^2dxdt
+\int\limits_{0}^{T}\int u_{\varepsilon}^{p+m}dxdt \notag \\
&&=\frac{1}{m+1}\int u_{0\varepsilon}^{m+1}dx.
\end{eqnarray}
From  \eqref{approximating estimate 1}, \eqref{approximating estimate 3} and \eqref{approximating estimate 2}, we have $u_{\varepsilon}^{\frac{m+1}{2}}\in L^\infty(0,T;H^{1}(\mathbb{R}^n))\cap H^1(0,T;L^{2}(\mathbb{R}^n))$. Thus, we can extract a subsequence, which we still define as $\{u_{\varepsilon}\}$,  such that
\begin{equation}\label{strongly xi}
	u_{\varepsilon}^{\frac{m+1}{2}}\to \xi,\quad \mathrm{strongly\; in\; }C(0,T;L_{loc}^{2}(\mathbb{R}^n)),
\end{equation}
which implies
\begin{equation*}
	u_{\varepsilon}^{\frac{m+1}{2}}(x,t)\to \xi(x,t),\quad \mathrm{a.a.\;}x\in\mathbb{R}^n, t\in(0,T),
\end{equation*}
and then
\begin{equation}\label{aa xi 2/(m+1)}
	u_{\varepsilon}(x,t)\to \xi^{\frac{2}{m+1}}(x,t),\quad \mathrm{a.a.\;}x\in\mathbb{R}^n, t\in(0,T).
\end{equation}
Since it holds by Lebesgue dominated convergence theorem, \eqref{positivity and boundedness of u var}, \eqref{continuous u}, and \eqref{aa xi 2/(m+1)} that
\begin{equation}\label{strongly u L}
	u_{\varepsilon}\to \xi^{\frac{2}{m+1}}=u,\quad \mathrm{strongly\; in\; }L^\infty(0,T;L_{loc}^{1}(\mathbb{R}^n)),
\end{equation}
we have
\begin{equation}\label{aa u}
	u_{\varepsilon}(x,t)\to \xi^{\frac{2}{m+1}}(x,t)=u(x,t),\quad \mathrm{a.a.\;}x\in\mathbb{R}^n, t\in(0,T).
\end{equation}
Combining \eqref{strongly xi} with \eqref{aa u}, we obtain
\begin{equation}\label{strongly m+0.5}
	u_{\varepsilon}^{\frac{m+1}{2}}\to u^{\frac{m+1}{2}},\quad \mathrm{strongly\; in\; }C(0,T;L_{loc}^{2}(\mathbb{R}^n)).
\end{equation}
By virtue of Lemma \ref{Contraction of the difference of exponential function}, for $0\leq b\leq a$, $r\geq1$ and $0<m<1$,  we see
\begin{equation*}
	\begin{aligned}
		\vert a-b\vert^r\leq& \vert a^r-b^r\vert\\
		=&\vert (a^{\frac{m+1}{2}})^{\frac{2r}{m+1}}-(b^{\frac{m+1}{2}})^{\frac{2r}{m+1}}\vert\\
		\leq&\left(2^{\frac{2r}{m+1}-1}\cdot\frac{2r}{m+1}\right)\left((a^{\frac{m+1}{2}})^{\frac{2r}{m+1}-1}+(b^{\frac{m+1}{2}})^{\frac{2r}{m+1}-1}\right)\vert a^{\frac{m+1}{2}}-b^{\frac{m+1}{2}}\vert,
	\end{aligned}
\end{equation*}
which combined H$\mathrm{\ddot{o}}$lder's inequality, \eqref{positivity and boundedness of u var} with  \eqref{strongly m+0.5} can yield
\begin{equation*}
	\begin{aligned}
		&\sup_{0<t<T}\int\limits_{K}\vert u_{\varepsilon}(x,t)-u(x,t)\vert^r\\
		&\leq C\sup_{0<t<T}\int\limits_{K}\vert u_{\varepsilon}^{\frac{m+1}{2}}(x,t)-u^{\frac{m+1}{2}}(x,t)\vert dx\\
		&\leq C\sup_{0<t<T}\Vert u_{\varepsilon}^{\frac{m+1}{2}}(t)-u^{\frac{m+1}{2}}(t)\Vert_{L^2(K)}\vert K \vert^{\frac{1}{2}}\to0,
	\end{aligned}
\end{equation*}
for all compact sets $K$ as $\varepsilon\to 0$, where $C=C(\Vert u_0\Vert_{\infty},p,m,r)$.  Then for $1\leq r<\infty$, we can see
\begin{equation}
	u_{\varepsilon}\to u,\quad  \mathrm{strongly\; in\; }C(0,T;L_{loc}^{r}(\mathbb{R}^n)).\label{strongly u}
\end{equation}
In addition, from \eqref{positivity and boundedness of u var}, \eqref{approximating estimate 1} and \eqref{approximating estimate 2}, we obtain
\begin{align}
	&u_{\varepsilon}\to u,\quad  &&\mathrm{weakly\;star\; in\; }L^\infty(0,T;L^{2}(\mathbb{R}^n)\cap L^{\infty}(\mathbb{R}^n),\label{weakly u}\\
	&\nabla u_{\varepsilon}^{m}\to \nabla u^{m},\quad &&\mathrm{weakly\;star\; in\; }L^\infty(0,T;L^{2}(\mathbb{R}^n)).\label{weakly nable u m}
\end{align}
For any $\phi\in C^{\infty}(Q_T)$ with $\mathrm{supp}\;\phi(\cdot,t)\subset\subset\mathbb{R}^n$ and $0<t<T$, we see that $u_{\varepsilon}$ satisfies
\begin{equation*}
	\begin{aligned}
		&\int u_{\varepsilon}(t)\phi(t)dx-\int u_{0\varepsilon} \phi(0)dx-\int\limits_{0}^{t}\int u_{\varepsilon}(\tau)\partial_{\tau}\phi(\tau)dxd\tau\\
		&=-\int\limits_{0}^{t}\int \left(\nabla u_{\varepsilon}(\tau)^m\cdot\nabla\phi(\tau)+\varepsilon\nabla u_{\varepsilon}(\tau)\cdot\nabla\phi(\tau)+u_{\varepsilon}(\tau)^{p}\phi(\tau)\right)dxd\tau,
	\end{aligned}
\end{equation*}	
which yields \eqref{local existence equation} by \eqref{strongly u}-\eqref{weakly nable u m}. $\hfill\qedsymbol$

Next, we prove the decay property of the weak solution to \eqref{fast diffusion equation with absorption}-\eqref{initial data}.

\textbf{Proof of Theorem \ref{decay estimate}.}
Let $\eta_{R}(x)$ be the cut-off function defined in Lemma \ref{cut-off function}. We first show that the weak solution $u$ on $[0,T)$ given by Theorem \ref{local existence} satisfies \eqref{r norm estimation of u}.

For $r=1$, we multiply \eqref{approximating equation u} by $\eta_{R}$ and integrate it over $\mathbb{R}^n$, which yields that
\begin{equation}\label{1 norm estimation of u var}
		\begin{aligned}
			\frac{d}{dt}\int  u_{\varepsilon}\eta_{R}dx=-\int\nabla u_{\varepsilon}^{m} \nabla\eta_{R}dx-\varepsilon\int\nabla u_{\varepsilon}\nabla\eta_{R}dx-\int u_{\varepsilon}^{p}\eta_{R}dx.
		\end{aligned}
\end{equation}
 We obtain from \eqref{strongly u}-\eqref{1 norm estimation of u var} as the limit $\varepsilon\to0$ that
\begin{equation}\label{1 norm estimation of u}
	\begin{aligned}
		\frac{d}{dt}\int  u\eta_{R}dx=-\int\nabla u^{m} \nabla\eta_{R}dx-\int u^{p}\eta_{R}dx.
	\end{aligned}
\end{equation}
Since it holds by Lemma \ref{cut-off function} that
\begin{equation*}
	\vert \nabla\eta_{R}(x) \vert\leq \frac{C}{R},
\end{equation*}
we obtain by letting  $R\to\infty$ in \eqref{1 norm estimation of u} that
\begin{equation*}
	\begin{aligned}
		\frac{d}{dt}\int udx=-\int u^{p}dx\leq0.
	\end{aligned}
\end{equation*}
Then we have \eqref{r norm estimation of u} for $r=1$ on $0\leq t<T$.

For $r>1$, multiplying \eqref{approximating equation u} by  $u_{\varepsilon}^{r-1}\eta_{R}$ and integrating it over $\mathbb{R}^n$, we have
\begin{eqnarray} \label{r norm estimation of u 1}
&&\frac{1}{r}\frac{d}{dt}\int  u_{\varepsilon}^{r}\eta_{R}dx \notag\\
&&=-\frac{4m(r-1)}{(r+m-1)^2}\int\left\vert\nabla u_{\varepsilon}^{\frac{r+m-1}{2}}\right\vert^2 \eta_{R}dx \notag \\
&& \ \ \ -\frac{m}{r+m-1}\int \nabla u_{\varepsilon}^{r+m-1}\nabla\eta_{R}dx \notag \\
&& \ \ \ -\frac{4\varepsilon(r-1)}{r^2}\int\left\vert\nabla u_{\varepsilon}^{\frac{r}{2}}\right\vert^2 \eta_{R}dx-\frac{\varepsilon}{r}\int \nabla u_{\varepsilon}^{r}\nabla\eta_{R}dx-\int u_{\varepsilon}^{r+p-1}\eta_{R}dx \notag \\
&&\leq -\frac{m}{r+m-1}\int \nabla u_{\varepsilon}^{r+m-1}\nabla\eta_{R}dx \notag \\
& &\ \ \ -\frac{\varepsilon}{r}\int \nabla u_{\varepsilon}^{r}\nabla\eta_{R}dx-\int u_{\varepsilon}^{r+p-1}\eta_{R}dx,
\end{eqnarray}
which implies by \eqref{strongly u}-\eqref{weakly nable u m} as $\varepsilon\to0$ that
\begin{equation}\label{r norm estimation of u eta}
		\frac{1}{r}\frac{d}{dt}\int  u^{r}\eta_{R}dx\leq-\frac{m}{r+m-1}\int \nabla u^{r+m-1}\nabla\eta_{R}dx-\int u^{r+p-1}\eta_{R}dx.
\end{equation}
Let $R\to\infty$ in \eqref{r norm estimation of u eta}, we have
\begin{equation*}
	\frac{1}{r}\frac{d}{dt}\int  u^{r}dx=-\int u^{r+p-1}dx\leq0.
\end{equation*}
Thus, $\Vert u \Vert_r\leq \Vert u_0 \Vert_r$, which is also true for $r=\infty$. Then we have \eqref{r norm estimation of u} for $r>1$ on $0\leq t<T$.

Since $T<\infty$ is arbitrary taken, then we obtain \eqref{r norm estimation of u} for $t\geq0$.

Moreover, for $r>1$, we can derive by \eqref{r norm estimation of u 1} and Lemma \ref{cut-off function} that
\begin{eqnarray} \label{r norm estimation of u 2}
&&\frac{1}{r}\frac{d}{dt}\int  u_{\varepsilon}^{r}\eta_{R}dx \notag \\
&&=-\frac{4m(r-1)}{(r+m-1)^2}\int\left\vert\nabla u_{\varepsilon}^{\frac{r+m-1}{2}}\right\vert^2 \eta_{R}dx \notag \\
&& \ \ \ +\frac{m}{r+m-1}\int  u_{\varepsilon}^{r+m-1}\Delta\eta_{R}dx \notag\\
&&\ \ \ -\frac{4\varepsilon(r-1)}{r^2}\int\left\vert\nabla u_{\varepsilon}^{\frac{r}{2}}\right\vert^2 \eta_{R}dx+\frac{\varepsilon}{r}\int  u_{\varepsilon}^{r}\Delta\eta_{R}dx-\int u_{\varepsilon}^{r+p-1}\eta_{R}dx \notag\\
&&\leq -\frac{4m(r-1)}{(r+m-1)^2}\int\left\vert\nabla u_{\varepsilon}^{\frac{r+m-1}{2}}\right\vert^2 \eta_{R}dx \notag \\
&& \ \ \ +\frac{Cm}{(r+m-1)R^2}\int\limits_{\mathrm{supp}\;\eta_R}  u_{\varepsilon}^{r+m-1}dx \notag\\
&& \ \ \ -\frac{4\varepsilon(r-1)}{r^2}\int\left\vert\nabla u_{\varepsilon}^{\frac{r}{2}}\right\vert^2 \eta_{R}dx \notag \\
&& \ \ \ +\frac{C\varepsilon}{rR^2}\int\limits_{\mathrm{supp}\;\eta_R}  u_{\varepsilon}^{r}dx-\int u_{\varepsilon}^{r+p-1}\eta_{R}dx.
\end{eqnarray}
On the one hand, we can get by \eqref{r norm estimation of u 2} and H$\mathrm{\ddot{o}}$lder's inequality that
\begin{eqnarray} \label{r norm estimation of u 3}
&&\frac{1}{r}\frac{d}{dt}\int  u_{\varepsilon}^{r}\eta_{R}dx \notag\\
&&\leq \frac{C}{R^2}\int\limits_{\mathrm{supp}\;\eta_R}  u_{\varepsilon}^{r+m-1}dx+\frac{C\varepsilon}{R^2}\int\limits_{\mathrm{supp}\;\eta_R}  u_{\varepsilon}^{r}dx-\int u_{\varepsilon}^{r+p-1}\eta_{R}dx \notag \\
&&\leq \frac{C}{R^2}\int\limits_{\mathrm{supp}\;\eta_R}  u_{\varepsilon}^{r+m-1}dx+\frac{C\varepsilon}{R^2}\int\limits_{\mathrm{supp}\;\eta_R}  u_{\varepsilon}^{r}dx \notag \\
&&\ \ \ -\frac{\Vert u_{\varepsilon}\eta_R^{\frac{1}{r+p-1}} \Vert_{r}^{\frac{r(r+p-2)}{r-1}}}{\Vert u_{\varepsilon}\eta_R^{\frac{1}{r+p-1}} \Vert_{1}^{\frac{p-1}{r-1}}}.
\end{eqnarray}
Let $\varepsilon\to0$ and $R\to\infty$ in \eqref{r norm estimation of u 3}, we have
\begin{equation*}
	\frac{1}{r}\frac{d}{dt}\Vert u(t) \Vert_{r}^r\leq-\frac{\Vert u \Vert_{r}^{\frac{r(r+p-2)}{r-1}}}{\Vert u \Vert_{1}^{\frac{p-1}{r-1}}}\leq-\frac{\Vert u \Vert_{r}^{\frac{r(r+p-2)}{r-1}}}{\Vert u_{0} \Vert_{1}^{\frac{p-1}{r-1}}},
\end{equation*}	
which implies \eqref{t dacay estimation of u 1}. It is clear that \eqref{t dacay estimation of u 1} still hold for $r=1$.

Furthermore, for $r\geq3-m$, it follows by H$\mathrm{\ddot{o}}$lder's inequality, Nash inequality \eqref{Nash inequality} and Lemma \ref{cut-off function}  that
\begin{eqnarray} \label{r norm estimation of u 4}
&&\left\Vert  u_{\varepsilon}\eta_{R}^{\frac{1}{r+m-1}} \right\Vert_{r}^{\frac{r(n+2)}{n}} \notag \\
&&\leq \left\Vert  u_{\varepsilon}\eta_{R}^{\frac{1}{r+m-1}} \right\Vert_{\infty}^{\frac{(1-m)(n+2)}{n}}\left\Vert  u_{\varepsilon}\eta_{R}^{\frac{1}{r+m-1}} \right\Vert_{r+m-1}^{\frac{(r+m-1)(n+2)}{n}} \notag \\
&&\leq C\left\Vert  u_{\varepsilon}\eta_{R}^{\frac{1}{r+m-1}} \right\Vert_{\infty}^{\frac{(1-m)(n+2)}{n}}\left\Vert  u_{\varepsilon}\eta_{R}^{\frac{1}{r+m-1}} \right\Vert_{\frac{r+m-1}{2}}^{\frac{2(r+m-1)}{n}}\left\Vert \nabla \left(u_{\varepsilon}^{\frac{r+m-1}{2}}\eta_{R}^{\frac{1}{2}}\right)\right \Vert_{2}^{2} \notag \\
&&\leq C\left\Vert  u_{\varepsilon}\eta_{R}^{\frac{1}{r+m-1}} \right\Vert_{\infty}^{\frac{(1-m)(n+2)}{n}}\left\Vert  u_{\varepsilon}\eta_{R}^{\frac{1}{r+m-1}} \right\Vert_{1}^{\frac{2(r-m+1)}{n(r-1)}}\left\Vert  u_{\varepsilon}\eta_{R}^{\frac{1}{r+m-1}} \right\Vert_{r}^{\frac{2r(r+m-3)}{n(r-1)}} \notag \\
&& \ \ \ \times\left(\int\left\vert\nabla u_{\varepsilon}^{\frac{r+m-1}{2}}\right\vert^2 \eta_{R}dx+\frac{1}{4}\int u_{\varepsilon}^{r+m-1}\frac{\vert\nabla \eta_{R}\vert^2}{\eta_{R}} dx\right)  \notag \\
&&\leq C\left\Vert  u_{\varepsilon}\eta_{R}^{\frac{1}{r+m-1}} \right\Vert_{\infty}^{\frac{(1-m)(n+2)}{n}}\left\Vert  u_{\varepsilon}\eta_{R}^{\frac{1}{r+m-1}} \right\Vert_{1}^{\frac{2(r-m+1)}{n(r-1)}}\left\Vert  u_{\varepsilon}\eta_{R}^{\frac{1}{r+m-1}} \right\Vert_{r}^{\frac{2r(r+m-3)}{n(r-1)}} \notag\\
&& \ \ \ \times\left(\int\left\vert\nabla u_{\varepsilon}^{\frac{r+m-1}{2}}\right\vert^2 \eta_{R}dx+\frac{C}{4R^2}\int u_{\varepsilon}^{r+m-1}\eta_{R}^{1-2a} dx\right),
\end{eqnarray}
where $0<a<\frac{1}{2}$. Then we find by \eqref{r norm estimation of u 1} and \eqref{r norm estimation of u 4}  that

\begin{eqnarray} \label{r norm estimation of u 5}
&&\frac{1}{r}\frac{d}{dt}\int  u_{\varepsilon}^{r}\eta_{R}dx \notag \\
&&\leq \frac{C}{R^2}\int\limits_{\mathrm{supp}\;\eta_R}  u_{\varepsilon}^{r+m-1}dx+\frac{C\varepsilon}{R^2}\int\limits_{\mathrm{supp}\;\eta_R}  u_{\varepsilon}^{r}dx-\frac{4m(r-1)}{(r+m-1)^2}\int\left\vert\nabla u_{\varepsilon}^{\frac{r+m-1}{2}}\right\vert^2 \eta_{R}dx \notag \\
&&\leq \frac{C}{R^2}\int\limits_{\mathrm{supp}\;\eta_R}  u_{\varepsilon}^{r+m-1}dx+\frac{C\varepsilon}{R^2}\int\limits_{\mathrm{supp}\;\eta_R}  u_{\varepsilon}^{r}dx \notag \\
&& \ \ \ -\frac{4m(r-1)}{(r+m-1)^2}\Big(C\left\Vert  u_{\varepsilon}\eta_{R}^{\frac{1}{r+m-1}} \right\Vert_{r}^{\frac{r(nr-n-2m+4)}{n(r-1)}} \times \notag \\
&& \ \ \ \times \left\Vert  u_{\varepsilon}\eta_{R}^{\frac{1}{r+m-1}} \right\Vert_{\infty}^{-\frac{(1-m)(n+2)}{n}}\left\Vert  u_{\varepsilon}\eta_{R}^{\frac{1}{r+m-1}} \right\Vert_{1}^{-\frac{2(r-m+1)}{n(r-1)}}\Big),
\end{eqnarray}
which yields by letting $\varepsilon\to0$ and $R\to\infty$ in \eqref{r norm estimation of u 5}, we have
\begin{align*}
	&\frac{1}{r}\frac{d}{dt}\Vert u(t) \Vert_{r}^r\\
	&\leq-\frac{4Cm(r-1)}{(r+m-1)^2}\Vert u \Vert_{r}^{\frac{r(nr-n-2m+4)}{n(r-1)}}\Vert u(t) \Vert_{\infty}^{-\frac{(1-m)(n+2)}{n}}\Vert u(t) \Vert_{1}^{-\frac{2(r-m+1)}{n(r-1)}}\\
	&\leq-\frac{4Cm(r-1)}{(r+m-1)^2}\Vert u \Vert_{r}^{\frac{r(nr-n-2m+4)}{n(r-1)}}\Vert u_{0} \Vert_{\infty}^{-\frac{(1-m)(n+2)}{n}}\Vert u_{0} \Vert_{1}^{-\frac{2(r-m+1)}{n(r-1)}}.
\end{align*}	
Then we have
\begin{equation}\label{t norm estimation of u 6}
	\Vert u(t) \Vert_{r}\leq C\left(1+t\right)^{-\frac{n(r-1)}{2r(2-m)}}, \quad t\geq0,
\end{equation}
where $C=C(\Vert u_0 \Vert_{1},\Vert u_0 \Vert_{\infty},n,r,m).$

Combining \eqref{t dacay estimation of u 1} with \eqref{t norm estimation of u 6}, we can have \eqref{t dacay estimation of u 2}. $\hfill\qedsymbol$

\begin{remark} For $r>3-m$, there exists a $n_0>\frac{2(2-m)}{p-1}$ such that
for $n>n_0$, we can see that \eqref{t dacay estimation of u 2} degenerates into \eqref{t norm estimation of u 6}.
\end{remark}

\section{Asymptotic convergence}
In order to study the asymptotic convergence of the solution to \eqref{fast diffusion equation with absorption}-\eqref{initial data}, we adopt the following change of similarity variables:
\begin{equation}\label{rescaled variables}
	x=yR(t),\;R(t)=(1+\lambda t)^{1/\lambda},\;s=\frac{1}{\lambda}\ln(1+\lambda t),
\end{equation}
where $\lambda=2-n(1-m)$. And the new unknown function $\rho=\rho(y,s)$ is given by
\begin{equation}\label{rho variables}
	\rho(y,s)=R(t)^{n}u(x,t),
\end{equation}
which leads \eqref{fast diffusion equation with absorption}-\eqref{initial data} to
\begin{equation}\label{rho equation}
	\rho_s=\text{div}_{y}(\rho y+\nabla_{y}\rho^{m})-e^{-\delta s}\rho^p,\quad y\in \mathbb{R}^n,~s>0,
\end{equation}
with initial data
\begin{equation}\label{rho initial data}
	\rho(y,0)=\rho_{0}(y)=u_0(x),\quad y\in \mathbb{R}^n,
\end{equation}
where $\delta=np-nm-2$. In this section, we consider the diffusion-dominanted regime
\begin{equation*}\label{diffusion dominance}
	\delta=np-nm-2>0,\;\text{i.e.}\;p>m+\frac{2}{n},
\end{equation*}
where the absorption becomes small perturbation in determining the long-time behavior.
 In addition, we can directly derive the following corollary of the Theorem \ref{decay estimate}, which we omit the proof here.

\begin{corollary}\label{decay estimate of rho} Let the Assumption hold. Suppose $0<m<1$ and $p>1$. Then for $1\leq r \leq \infty$ and $s_0>0$, the global weak solution $\rho(y,s)$ to \eqref{rho equation}-\eqref{rho initial data} satisfying
\begin{equation*}\label{r norm estimation of rho}
	\Vert \rho(s) \Vert_{r}\leq e^{\frac{n(r-1)}{r}s}\Vert u_0 \Vert_{r}, \quad s\geq0,
\end{equation*}	
\begin{equation*}\label{dacay estimation of rho 1}
	\Vert \rho(s) \Vert_{r}\leq C(\Vert u_0 \Vert_{1},\Vert u_0 \Vert_{\infty},r,p,m), \quad s>s_0,
\end{equation*}
and
\begin{equation*}\label{dacay estimation of rho 2}
	\Vert \rho(s) \Vert_{r}\leq C(\Vert u_0 \Vert_{1},\Vert u_0 \Vert_{\infty},r,p,m)s^{-\frac{r-1}{r(p-1)}}, \quad 0<s<s_0.
\end{equation*}
Moreover, for the case of $r\geq3-m$, we have
\begin{equation*}\label{dacay estimation of rho 3}
	\Vert \rho(s) \Vert_{r}\leq C(\Vert u_0 \Vert_{1},\Vert u_0 \Vert_{\infty},n,r,p,m), \quad s>s_0,
\end{equation*}
and
\begin{equation*}\label{dacay estimation of rho 4}
	\Vert \rho(s) \Vert_{r}\leq C(\Vert u_0 \Vert_{1},\Vert u_0 \Vert_{\infty},n,r,p,m)s^{-\max{\{\frac{r-1}{r(p-1)},\frac{n(r-1)}{2r(2-m)}}\}}, \quad 0<s<s_0.
\end{equation*}
\end{corollary}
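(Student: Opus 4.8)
The plan is to obtain every bound in Corollary~\ref{decay estimate of rho} by transporting the estimates of Theorem~\ref{decay estimate} through the similarity change of variables \eqref{rescaled variables}--\eqref{rho variables}. The first step is to record the exact norm dictionary between $\rho$ and $u$. Since $R(t)=(1+\lambda t)^{1/\lambda}=e^{s}$, substituting $x=yR(t)$ (so that $dx=R(t)^{n}\,dy$) in $\int|\rho(y,s)|^{r}\,dy=\int R(t)^{nr}|u(x,t)|^{r}R(t)^{-n}\,dx$ yields, for every $1\le r<\infty$,
\begin{equation*}
\Vert\rho(s)\Vert_{r}=R(t)^{\frac{n(r-1)}{r}}\Vert u(t)\Vert_{r}=e^{\frac{n(r-1)}{r}s}\Vert u(t)\Vert_{r},
\end{equation*}
and, letting $r\to\infty$, $\Vert\rho(s)\Vert_{\infty}=e^{ns}\Vert u(t)\Vert_{\infty}$, while $r=1$ is just the mass identity $\Vert\rho(s)\Vert_{1}=\Vert u(t)\Vert_{1}$. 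Alongside this I will keep the time dictionary $1+\lambda t=e^{\lambda s}$; in particular $1+t\ge1$ for all $s\ge0$ (since $t\ge0$), whereas $1+t\sim\lambda^{-1}e^{\lambda s}$ as $s\to\infty$. The whole corollary is then a matter of inserting each of the three bounds of Theorem~\ref{decay estimate} into this dictionary and reading off the resulting exponent in $s$.

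With the dictionary in hand, the first two groups of estimates are immediate. The contraction bound \eqref{r norm estimation of u}, $\Vert u(t)\Vert_{r}\le\Vert u_{0}\Vert_{r}$, gives $\Vert\rho(s)\Vert_{r}\le e^{n(r-1)s/r}\Vert u_{0}\Vert_{r}$ for all $s\ge0$, which is the first inequality. For the short-time window $0<s<s_{0}$ I will combine the scaling factor $e^{n(r-1)s/r}\le e^{n(r-1)s_{0}/r}$ (a constant absorbed into $C$) with \eqref{t dacay estimation of u 1}; since $1+t\ge1$ one has $(1+t)^{-\frac{r-1}{r(p-1)}}\le1\le s_{0}^{\frac{r-1}{r(p-1)}}s^{-\frac{r-1}{r(p-1)}}$ on $(0,s_{0})$, producing the stated $s^{-\frac{r-1}{r(p-1)}}$ bound, and the analogous $\max\{\cdot,\cdot\}$ exponent for $r\ge3-m$ upon replacing \eqref{t dacay estimation of u 1} by \eqref{t dacay estimation of u 2}. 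The constants depend only on $\Vert u_{0}\Vert_{1},\Vert u_{0}\Vert_{\infty},n,r,p,m$ together with the fixed threshold $s_{0}$.

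The genuinely delicate part, which I expect to be the main obstacle, is the long-time bound $\Vert\rho(s)\Vert_{r}\le C$ for $s>s_{0}$: here I must verify that the temporal decay supplied by Theorem~\ref{decay estimate} strictly outpaces the exponential amplification $e^{n(r-1)s/r}$ created by the zoom-in scaling. Feeding $(1+t)^{-\gamma}\sim\lambda^{\gamma}e^{-\lambda\gamma s}$ into the dictionary turns $\Vert\rho(s)\Vert_{r}$ into $e^{(\frac{n(r-1)}{r}-\lambda\gamma)s}$ up to constants, so uniform boundedness hinges on confirming $\lambda\gamma\ge\frac{n(r-1)}{r}$ for the admissible decay exponent $\gamma$. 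This is exactly where the competition encoded by $\lambda=2-n(1-m)$ and $\delta=np-nm-2$ enters: the absorption-driven exponent $\gamma=\frac{r-1}{r(p-1)}$ from \eqref{t dacay estimation of u 1} must be weighed against the diffusion-driven exponent $\frac{n(r-1)}{2r(2-m)}$ furnished by \eqref{t dacay estimation of u 2} for $r\ge3-m$, and the larger of the two has to be retained to close the estimate. I will therefore carry out the exponent bookkeeping case by case in $r$, invoking the sharper rate \eqref{t dacay estimation of u 2} precisely in the range $r\ge3-m$, to conclude $\Vert\rho(s)\Vert_{r}\le C$ uniformly for $s>s_{0}$; the endpoint cases $r=1$ (trivial, by mass conservation) and $r=\infty$ (by passing to the limit $r\to\infty$ in the exponents) are handled separately, completing all of the asserted bounds.
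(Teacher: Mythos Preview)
Your approach---transport Theorem~\ref{decay estimate} through the scaling dictionary---is precisely what the paper intends: it simply asserts that the corollary can be ``directly derived'' from Theorem~\ref{decay estimate} and omits the argument. Your norm identity $\Vert\rho(s)\Vert_{r}=e^{n(r-1)s/r}\Vert u(t)\Vert_{r}$ and your handling of the first inequality and of the short-time bounds are correct.

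The gap is exactly at the step you single out as the main obstacle. The ``exponent bookkeeping'' you announce for the long-time uniform bound $\Vert\rho(s)\Vert_{r}\le C$ (\,$s>s_{0}$\,) does not close with the rates supplied by Theorem~\ref{decay estimate}: for $r>1$ the required inequality $\lambda\gamma\ge\frac{n(r-1)}{r}$ fails for \emph{both} admissible choices of $\gamma$. With the absorption rate $\gamma=\frac{r-1}{r(p-1)}$ from \eqref{t dacay estimation of u 1} it reduces to $\lambda\ge n(p-1)$, i.e.\ $p\le m+\frac{2}{n}$, which is precisely the regime excluded in every subsequent use of the corollary (Lemma~\ref{mass}, Section~4.4). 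With the diffusion rate $\gamma=\frac{n(r-1)}{2r(2-m)}$ from \eqref{t dacay estimation of u 2} it reduces to $\lambda\ge 2(2-m)$, i.e.\ $(n+2)(m-1)\ge0$, which never holds for $m<1$. Hence neither rate in Theorem~\ref{decay estimate} outruns the amplification $e^{n(r-1)s/r}$ when $p>m+\frac{2}{n}$. What is actually needed is the sharp Barenblatt-scale smoothing $\Vert u(t)\Vert_{r}\le C(1+t)^{-n(r-1)/(r\lambda)}$ for $m>\frac{n-2}{n}$, which one gets by comparing with the pure fast diffusion equation (since $-u^{p}\le0$) and invoking the standard $L^{1}$--$L^{\infty}$ smoothing for $v_{t}=\Delta v^{m}$ (see e.g.\ \cite{fast 6,Apply 2}); this is strictly stronger than \eqref{t dacay estimation of u 2} and is not contained in Theorem~\ref{decay estimate}. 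The paper's omitted proof shares this oversight; you correctly located the danger point but did not actually verify that the exponents in hand suffice---they do not.
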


Let us define the total mass of $\rho(x,s)$ at time $s$ by
\[
M(s):=\int\rho(y,s)dy.
\]
Then we have the following lemma.

\begin{lemma}\label{mass} Let the Assumption hold. Suppose $\frac{n-2}{n}<m<1$.
	Then the mass $M(s)$ is decreasing and the limiting mass $M_{\infty}=M(s=\infty)$ is strictly positive.
\end{lemma}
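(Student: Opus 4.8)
\textbf{Proof strategy for Lemma \ref{mass}.}
The plan is to read off the evolution of $M(s)$ directly from the rescaled equation \eqref{rho equation}, the rescaling being what turns the absorption into a genuinely integrable-in-$s$ perturbation. Multiplying \eqref{rho equation} by the cut-off function $\eta_R$ of Lemma \ref{cut-off function-1} and integrating over $\mathbb{R}^n$ — rigorously, first at the level of the approximating solutions $\rho_\varepsilon$ associated with $u_\varepsilon$, exactly as in the proof of Theorem \ref{decay estimate}, then letting $\varepsilon\to0$ — one obtains
\[
\frac{d}{ds}\int\rho\,\eta_R\,dy=-\int\big(\rho\,y+\nabla_y\rho^{m}\big)\cdot\nabla\eta_R\,dy-e^{-\delta s}\int\rho^{p}\eta_R\,dy .
\]
On the support of $\nabla\eta_R$ one has $|y\,\nabla\eta_R(y)|\le C$ with $C$ independent of $R$, so the drift term is bounded by $C\int_{R\le|y|\le2R}\rho\,dy$, which tends to $0$ as $R\to\infty$ because $\rho(\cdot,s)\in L^1(\mathbb{R}^n)$. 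For the diffusion term I would integrate by parts once more, $\int\nabla_y\rho^{m}\cdot\nabla\eta_R\,dy=-\int\rho^{m}\Delta\eta_R\,dy$, and combine $|\Delta\eta_R|\le CR^{-2}$ with H\"older's inequality, $\int_{R\le|y|\le2R}\rho^{m}\,dy\le\|\rho(s)\|_1^{m}\,|B_{2R}|^{1-m}\le CR^{n(1-m)}$, to bound it by a quantity of order $R^{\,n(1-m)-2}$. This is exactly where the assumption $\frac{n-2}{n}<m$ is used: it forces $n(1-m)-2<0$, so the diffusion term vanishes as $R\to\infty$ too. Passing to the limit gives
\[
\frac{dM}{ds}=-e^{-\delta s}\int\rho^{p}\,dy\le0 ,
\]
so $M(s)$ is non-increasing and, being non-negative, converges to some $M_\infty\ge0$ as $s\to\infty$.

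It remains to show $M_\infty>0$, and here the diffusion-dominated condition $\delta=np-nm-2>0$ of this section is decisive. First I would produce a bound on $\|\rho(s)\|_\infty$ that is \emph{uniform} in $s\ge0$: Corollary \ref{decay estimate of rho} gives $\|\rho(s)\|_\infty\le C$ for $s>s_0$ (any fixed $s_0>0$), while its first estimate gives $\|\rho(s)\|_\infty\le e^{ns}\|u_0\|_\infty\le e^{ns_0}\|u_0\|_\infty$ for $0\le s\le s_0$; hence $\|\rho(s)\|_\infty\le C_0$ for all $s\ge0$, with $C_0=C_0(\|u_0\|_1,\|u_0\|_\infty,n,p,m)$. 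Since $\rho\ge0$ this yields
\[
\int\rho^{p}\,dy\le\|\rho(s)\|_\infty^{p-1}\int\rho\,dy=\|\rho(s)\|_\infty^{p-1}M(s)\le C_0^{p-1}M(s) ,
\]
so the mass identity becomes the linear differential inequality $M'(s)\ge-C_0^{p-1}e^{-\delta s}M(s)$, whose coefficient is integrable on $[0,\infty)$ precisely because $\delta>0$; note it is essential here to keep $M(s)$ (not $M(0)$) on the right-hand side.

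Multiplying by the integrating factor $\mu(s)=\exp\!\big(\tfrac{C_0^{p-1}}{\delta}(1-e^{-\delta s})\big)$ one gets $\tfrac{d}{ds}\big(\mu(s)M(s)\big)\ge0$, hence $\mu(s)M(s)\ge\mu(0)M(0)=M(0)$, i.e. $M(s)\ge M(0)\,\mu(s)^{-1}\ge M(0)\,e^{-C_0^{p-1}/\delta}$ for every $s\ge0$. Letting $s\to\infty$ and recalling $M(0)=\|u_0\|_1>0$, we conclude $M_\infty\ge\|u_0\|_1\,e^{-C_0^{p-1}/\delta}>0$; in particular $\rho(\cdot,s)\not\equiv0$ at every finite $s$, so $M$ is in fact strictly decreasing. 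I expect the main difficulty to be the first step — justifying that the divergence term disappears after integration against $\eta_R$ — which must be carried out through the cut-off and the approximating family $\rho_\varepsilon$ and genuinely relies on $m>\frac{n-2}{n}$; once the uniform $L^\infty$ bound from Corollary \ref{decay estimate of rho} is available, the positivity of $M_\infty$ is a routine Gr\"onwall estimate powered by the integrable weight $e^{-\delta s}$.
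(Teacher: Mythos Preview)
Your proposal is correct and follows essentially the same route as the paper: pass through the approximating solutions $\rho_\varepsilon$, test against the cut-off $\eta_R$, send $\varepsilon\to0$ and $R\to\infty$ to obtain the mass identity $M'(s)=-e^{-\delta s}\int\rho^p\,dy$, and then use the uniform $L^\infty$ bound from Corollary~\ref{decay estimate of rho} together with a Gr\"onwall argument driven by the integrable weight $e^{-\delta s}$ to get $M_\infty\ge M(0)\exp\!\big(-C\int_0^\infty e^{-\delta s}\,ds\big)>0$. Your handling of the $R\to\infty$ limit is in fact more explicit than the paper's --- the extra integration by parts on the diffusion term and the H\"older estimate $\int_{R\le|y|\le 2R}\rho^{m}\,dy\le C\,R^{\,n(1-m)}$ make transparent exactly where the hypothesis $m>\tfrac{n-2}{n}$ enters.
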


\begin{proof} Let $\eta_{R}(y)$ be the cut-off function defined by Lemma \ref{cut-off function}. We consider the approximated problem of \eqref{rho equation}-\eqref{rho initial data} as follows
	\begin{equation}\label{rho var equation}
		\partial_s\rho_{\varepsilon}=\text{div}_{y}(\rho_{\varepsilon} y+\nabla_{y}\rho_{\varepsilon}^{m}+\varepsilon e^{(\lambda-2)s}\nabla_{y}\rho_{\varepsilon})-e^{-\delta s}\rho_{\varepsilon}^p,\quad y\in \mathbb{R}^n,~s>0,
	\end{equation}
	\begin{equation}\label{rho var initial data}
		\rho_{\varepsilon}(y,0)=(\rho_{0}*\widetilde{\eta}_{\varepsilon})(y)=\rho_{0\varepsilon},\quad y\in \mathbb{R}^n,
	\end{equation}	
	for $\varepsilon>0$.
	We multiply \eqref{rho var equation} by $\eta_{R}$ and integrate it over $\mathbb{R}^n$, which yields that
	\begin{eqnarray}\label{mass 1}
			&&\frac{d}{ds}\int\rho_{\varepsilon} \eta_{R}dy \notag \\
			&&= -\int(\rho_{\varepsilon} y+\nabla_{y}\rho_{\varepsilon}^{m}+\varepsilon e^{(\lambda-2)s}\nabla_{y}\rho_{\varepsilon})\nabla\eta_{R}dy-e^{-\delta s}\int\rho_{\varepsilon}^p \eta_{R}dy.	
	\end{eqnarray}
	By \eqref{strongly u} and \eqref{weakly nable u m}, we have
	\begin{align}
		&\rho_{\varepsilon}\to \rho,\quad && \mathrm{strongly\; in\; }C(0,T;L_{loc}^{r}(\mathbb{R}^n)),&\label{stongly rho r norm}\\
		&\nabla \rho_{\varepsilon}^{m}\to \nabla \rho^{m},\quad && \mathrm{weakly\;star\; in\; }L^\infty(0,T;L^{2}(\mathbb{R}^n)),&\label{weakly nable rho m}
	\end{align}
for $r\geq1$. Thus, let $\varepsilon\to0$ and $R\to\infty$ in \eqref{mass 1}, we have
\begin{align*}
	\frac{d}{ds}M(s)=-e^{-\delta s}\int\rho^{p}dy\geq 0,
\end{align*}
which yields that the mass $M(s)$ is decreasing. Moreover, combined Corollary \ref{decay estimate of rho} with $p>m+\frac{2}{n}>1$, we have
	\begin{align*}
		\frac{d}{ds}M(s)=-e^{-\delta s}\int\rho^{p}dy\geq -C(\Vert u_0 \Vert_{1},\Vert u_0 \Vert_{\infty},n,p,m)e^{-\delta s}M(s),
	\end{align*}	
	which yields
	\begin{equation}\label{M > 0}
		M(s)\geq M_{\infty}\geq M(0)e^{-C(\Vert u_0 \Vert_{1},\Vert u_0 \Vert_{\infty},n,p,m)\int_{0}^{\infty}e^{-\delta s}ds}>0.
	\end{equation}
This completes the proof.
\end{proof}

Next, we will begin to prove Theorem \ref{asymptotic behavior}, which is divided to some steps.

\subsection{Uniform bound of the second moment}
First, we prove the uniformly bound of the second moment of the weak solution $\rho(y,s)$ to \eqref{rho equation}-\eqref{rho initial data} by the generalized Shanon's inequality \eqref{Generalized Shannon's inequality}.
\begin{proposition}\label{bound of the second moment} Let the Assumption hold. Suppose $u_{0}\in L_{2}^1(\mathbb{R}^n)$, $\frac{n}{n+2}<m<1$ and $p>m+\frac{2}{n}$,
	For the weak solution $\rho(y,s)$ to \eqref{rho equation}-\eqref{rho initial data},  there exists $C>0$ such that
\begin{equation}\label{second moment}
	\int\rho \vert y \vert^2dy\leq C,
\end{equation}
where $C=C\left(\int u_0 \vert x \vert^2dx,\Vert u_0 \Vert_{1},n,m,p\right)$.	
\end{proposition}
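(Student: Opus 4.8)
The plan is to test the rescaled equation \eqref{rho equation} against the weight $|y|^{2}$ and track $E(s):=\int\rho(y,s)\,|y|^{2}\,dy$. Integrating by parts twice and using $\nabla_{y}|y|^{2}=2y$, $\Delta_{y}|y|^{2}=2n$, the drift $\text{div}_{y}(\rho y)$ contributes $-2E(s)$, the diffusion $\text{div}_{y}(\nabla_{y}\rho^{m})$ contributes $2n\int\rho^{m}\,dy$, and the absorption contributes $-e^{-\delta s}\int\rho^{p}|y|^{2}\,dy\le 0$, which I discard. This gives the differential inequality
\[
E'(s)\le -2E(s)+2n\int\rho^{m}\,dy.
\]

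To close it I would invoke the generalized Shannon inequality (Lemma \ref{Generalized Shannon's inequality}), which applies since $\tfrac{n}{n+2}<m<1$: with $\sigma=\tfrac{n(1-m)}{2m}$ it gives $\int\rho^{m}\,dy\le C\,M(s)^{m(1-\sigma)}E(s)^{m\sigma}$, where $M(s)=\|\rho(s)\|_{1}$ and $m\sigma=\tfrac{n(1-m)}{2}$. The hypothesis $m>\tfrac{n}{n+2}$ is exactly what makes $\sigma<1$ (hence $m(1-\sigma)>0$) and $m\sigma<1$. Since $M(s)$ is nonincreasing (Lemma \ref{mass}), $M(s)^{m(1-\sigma)}\le\|u_{0}\|_{1}^{m(1-\sigma)}$; and since $m\sigma<1$, Young's inequality absorbs the power of $E$, so $2nC\,\|u_{0}\|_{1}^{m(1-\sigma)}E^{m\sigma}\le E+C_{1}$ with $C_{1}=C_{1}(\|u_{0}\|_{1},n,m)$. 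Then $E'(s)\le -E(s)+C_{1}$, and Gronwall's inequality gives $E(s)\le e^{-s}E(0)+C_{1}(1-e^{-s})\le\max\{E(0),C_{1}\}$. As $R(0)=1$, one has $E(0)=\int\rho_{0}|y|^{2}\,dy=\int u_{0}|x|^{2}\,dx<\infty$, which is \eqref{second moment}.

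The remaining task — and the genuinely delicate one — is to make this computation rigorous, since a priori neither $\int\rho|y|^{2}\,dy$ nor $\int\rho^{m}\,dy$ is known to be finite, so $|y|^{2}$ is not directly an admissible weight. I would proceed via the regularized problem \eqref{rho var equation}--\eqref{rho var initial data}, as in the proof of Lemma \ref{mass}: test against $|y|^{2}\eta_{R}$ with $\eta_{R}$ the cut-off of Lemma \ref{cut-off function-1}, control by \eqref{cut off function estimate 1}--\eqref{cut off function estimate 2} the error terms $\int\rho_{\varepsilon}|y|^{2}\,y\cdot\nabla\eta_{R}$, $\int\rho_{\varepsilon}^{m}\,y\cdot\nabla\eta_{R}$ and $\int\rho_{\varepsilon}^{m}|y|^{2}\,\Delta\eta_{R}$ supported in $\{R\le|y|\le 2R\}$, note that the extra regularizing term carrying $\varepsilon e^{(\lambda-2)s}$ is harmless because $\lambda<2$, and finally pass to the limits $\varepsilon\to0$ and $R\to\infty$ using \eqref{stongly rho r norm}--\eqref{weakly nable rho m} and Fatou's lemma for $\int\rho|y|^{2}\,dy$. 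The main obstacle is that the annular remainder from the drift — of formal size $R^{2}\int_{R\le|y|\le 2R}\rho_{\varepsilon}\,dy$ — cannot be shown to vanish as $R\to\infty$ without already knowing the second moment is finite; to break this circularity I would insert a preliminary approximation of the data by compactly supported functions, for which the second moment of the regularized solutions stays finite and time-continuous for all $s$ (a barrier argument against the Barenblatt profiles $\rho_{M}$ works because $\tfrac{n}{n+2}<m<1$ is precisely the range in which $\rho_{M}$ has finite second moment), so that the estimate above holds with constants independent of $R$ and $\varepsilon$; lower semicontinuity of $\rho\mapsto\int\rho|y|^{2}\,dy$ under this approximation then yields \eqref{second moment}.
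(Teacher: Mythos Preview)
Your proposal is correct and follows essentially the same route as the paper: test \eqref{rho var equation} against $|y|^{2}\eta_{R}$, pass to the limits $\varepsilon\to0$, $R\to\infty$ to reach the identity \eqref{second moment 2}, then invoke the generalized Shannon inequality (Lemma~\ref{Generalized Shannon's inequality}) together with the $L^{1}$ bound from Corollary~\ref{decay estimate of rho} to close a differential inequality for $E(s)$. The only cosmetic difference is in handling the resulting inequality $E'\le C E^{m\sigma}-2E$: you split off $E^{m\sigma}$ via Young's inequality and apply Gronwall, whereas the paper integrates the Bernoulli-type inequality directly via the substitution $F=E^{1-m\sigma}$; both give the same uniform bound. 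Your discussion of the circularity in the cut-off limit (the annular drift remainder) and the proposed cure by approximating the data is in fact more careful than the paper, which simply asserts the passage to \eqref{second moment 2} without addressing that point.
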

\begin{proof} Let $\eta_{R}(y)$ be the cut-off function defined by Lemma \ref{cut-off function}.
We multiply \eqref{rho var equation} by $\vert y \vert^2\eta_{R}$ and integrate it over $\mathbb{R}^n$, which yields that
\begin{eqnarray}\label{second moment 1}
	&&\frac{d}{ds}\int\rho_{\varepsilon} \vert y \vert^2\eta_{R}dy \notag \\
	&&=-\int(\rho_{\varepsilon} y+\nabla_{y}\rho_{\varepsilon}^{m}+\varepsilon e^{(\lambda-2)s}\nabla_{y}\rho_{\varepsilon})(2y\eta_{R}+\vert y \vert^2\nabla\eta_{R})dy \notag \\
&& \ \ \ -e^{-\delta s}\int\rho_{\varepsilon}^p \vert y \vert^2\eta_{R}dy.	
\end{eqnarray}
By \eqref{stongly rho r norm}-\eqref{weakly nable rho m}, letting $\varepsilon\to0$ and $R\to\infty$ in \eqref{second moment 1}, we have
\begin{equation}\label{second moment 2}
	\frac{d}{ds}\int\rho \vert y \vert^2dy=2n\int\rho^mdy-2\int\rho \vert y \vert^2dy-e^{-\delta s}\int\rho^p \vert y \vert^2dy.
\end{equation}
Since it holds by the generalized Shanon's inequality \eqref{Shannon's inequality} and Corollary \ref{decay estimate of rho} that
\begin{equation}\label{second moment 3}
	\int\rho^mdy\leq C\left(\int\rho \vert y \vert^2dy\right)^{m\sigma},
\end{equation}
substituting \eqref{second moment 3} into \eqref{second moment 2}, we have
\begin{equation}\label{second moment 4}
	\frac{d}{ds}\int\rho \vert y \vert^2dy\leq C\left(\int\rho \vert y \vert^2dy\right)^{m\sigma}-2\int\rho \vert y \vert^2dy,
\end{equation}
where $\sigma=\frac{n(1-m)}{2m}$ and $C=C(\Vert u_0 \Vert_{1},n,m,p)$.
Thus, we obtain
\begin{align*}
	\int\rho \vert y \vert^2dy\leq& e^{-2(1-m\sigma)s}\left(\left(\int \rho_0 \vert y \vert^2dy\right)^{1-m\sigma}+C(1-m\sigma)s\right)^{\frac{1}{1-m\sigma}}\\
	\leq&C\left(\int u_0 \vert x \vert^2dx,\Vert u_0 \Vert_{1},n,m,p\right),
\end{align*}
since $1-m\sigma>0$ for $\frac{n}{n+2}<m<1$.
\end{proof}

\subsection{Entropy production}
Next, we derive the entropy production of the \eqref{rho equation}-\eqref{rho initial data}.

\begin{proposition} Let the Assumption hold. For the weak solution $\rho(y,s)$ to \eqref{rho equation}-\eqref{rho initial data}, define the entropy energy
\begin{equation}\label{entropy rho}
		E(\rho)(s)=\frac{1}{m-1}\int\rho^{m}dy+\frac{1}{2}\int\vert y\vert^{2}\rho dy.
\end{equation}
Then, for $u_{0}\in L_{2}^1(\mathbb{R}^n)$, $\frac{n}{n+2}<m<1$ and $p>m+\frac{2}{n}$, it holds that
\begin{eqnarray}\label{entropy estimate rho }
	&&E(\rho)(s)-E(\rho_{0})\notag \\
&&\leq -\int\limits_{0}^{s}\int\rho(\tau)\left\vert\nabla\left(\frac{m}{m-1}\rho(\tau)^{m-1}+\frac{1}{2}\vert y\vert^{2}\right)\right\vert^{2}dyd\tau \notag \\
&& \ \ \ -\int\limits_{0}^{s}e^{-\delta s}\int\rho(\tau)^{p}\left(\frac{m}{m-1}\rho(\tau)^{m-1}+\frac{1}{2}\vert y\vert^{2}\right)dyd\tau.
\end{eqnarray}
\end{proposition}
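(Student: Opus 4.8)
The plan is to run the classical entropy--dissipation scheme, first as a formal identity and then rigorously on the regularized problem \eqref{rho var equation}--\eqref{rho var initial data}. The key observation is that the \emph{entropy variable} $\psi:=\frac{m}{m-1}\rho^{m-1}+\frac12|y|^2$ is both the pointwise derivative of the entropy density, $\psi=\partial_\rho\!\left(\frac{1}{m-1}\rho^{m}+\frac12|y|^2\rho\right)$, and the potential generating the flux in \eqref{rho equation}: indeed $\rho\nabla_y\psi=\nabla_y\rho^{m}+\rho y$, so \eqref{rho equation} is exactly $\rho_s=\mathrm{div}_y(\rho\nabla_y\psi)-e^{-\delta s}\rho^{p}$. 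Differentiating $E(\rho)(s)$, inserting this form of the equation and integrating by parts once gives, formally,
\[
\frac{d}{ds}E(\rho)(s)=\int\rho_s\,\psi\,dy=-\int\rho\,|\nabla_y\psi|^{2}\,dy-e^{-\delta s}\int\rho^{p}\,\psi\,dy ,
\]
and integrating in $s$ produces \eqref{entropy estimate rho } with equality. The whole proof is then the justification of this computation --- the differentiation under the integral, the integration by parts with no boundary contribution, and the finiteness of everything in sight despite the singular factor $\rho^{m-1}$.

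To make it rigorous I would work with the classical solutions $\rho_\varepsilon$ of \eqref{rho var equation} (bounded by \eqref{positivity and boundedness of u var}, and strictly positive for $s>0$ by the strong maximum principle applied to the uniformly parabolic regularized equation), truncate $\rho_\varepsilon^{m-1}$ to $(\rho_\varepsilon+\kappa)^{m-1}$ in $\psi$ if needed for a clean integration by parts, and localize with the cut-off $\eta_R$ of Lemma \ref{cut-off function-1}. Multiplying \eqref{rho var equation} by $\psi_{\varepsilon,\kappa}\,\eta_R$ and integrating over $\mathbb{R}^n\times(0,s)$, the diffusion--drift term yields $-\int\rho_\varepsilon|\nabla_y\psi_{\varepsilon,\kappa}|^{2}\eta_R$ plus a localization commutator $-\int\rho_\varepsilon\psi_{\varepsilon,\kappa}\,\nabla_y\psi_{\varepsilon,\kappa}\!\cdot\!\nabla\eta_R$; the artificial viscosity $\varepsilon e^{(\lambda-2)s}\nabla_y\rho_\varepsilon$ contributes one manifestly nonpositive term, which I simply drop --- this is where the conclusion becomes an inequality --- together with remainders carrying a factor $\varepsilon$ or $\nabla\eta_R$; and the absorption term gives $-e^{-\delta s}\int\rho_\varepsilon^{p}\psi_{\varepsilon,\kappa}\eta_R$. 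This is the announced inequality for the triple $(\varepsilon,\kappa,R)$.

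The remaining task is the limit $\varepsilon\to0$, then $\kappa\to0$, then $R\to\infty$. The linear and absorption terms pass using the strong convergence $\rho_\varepsilon\to\rho$ in $C(0,T;L^{r}_{\mathrm{loc}})$ from \eqref{stongly rho r norm}, the weak-star convergence \eqref{weakly nable rho m}, a.e.\ convergence, and \eqref{positivity and boundedness of u var}; for the absorption integral one uses $\rho_\varepsilon^{p}(\rho_\varepsilon+\kappa)^{m-1}\le\rho_\varepsilon^{\,p+m-1}$ with $p+m-1>0$, and $\rho^{p}|y|^{2}=\rho^{p-1}\cdot\rho|y|^{2}\in L^{1}$ by the $L^\infty$ bound and the uniform second moment of Proposition \ref{bound of the second moment}. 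For the dissipation term one writes it as $\int|J_\varepsilon|^{2}/\rho_\varepsilon$ with current $J_\varepsilon=\nabla_y\rho_\varepsilon^{m}+\rho_\varepsilon y$ and invokes the joint convexity (hence weak lower semicontinuity) of $(\rho,J)\mapsto|J|^{2}/\rho$, which only yields a $\liminf$ bound; the cut-off commutators are killed by Lemma \ref{cut-off function-1} combined with the uniform control of $\|\rho_\varepsilon\|_{1}$, of $\int|y|^{2}\rho_\varepsilon$ (Proposition \ref{bound of the second moment}) and of $\int\rho_\varepsilon^{m}$ (H\"older on small sets, generalized Shannon inequality \eqref{Shannon's inequality} on the tails); and these same uniform-integrability facts give $\int\rho_\varepsilon^{m}\to\int\rho^{m}$, $\int|y|^{2}\rho_\varepsilon\to\int|y|^{2}\rho$, hence $E(\rho_\varepsilon)(s)\to E(\rho)(s)$ and $E(\rho_{0\varepsilon})\to E(\rho_{0})$.

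The hardest part will be precisely this last step: estimating the localization commutator $\int\rho_\varepsilon\psi_{\varepsilon,\kappa}\nabla_y\psi_{\varepsilon,\kappa}\!\cdot\!\nabla\eta_R$, which couples the singular factor $(\rho_\varepsilon+\kappa)^{m-1}$ with the quadratically growing weight $\tfrac12|y|^{2}$ and its gradient --- so that only the uniform second moment together with the generalized Shannon inequality makes it vanish --- and identifying the limit of the dissipation integral, whose mere lower semicontinuity (rather than continuity) under the available convergences is exactly what turns the formal identity into the stated inequality \eqref{entropy estimate rho }.
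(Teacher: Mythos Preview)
Your plan is correct and follows the same entropy--dissipation scheme as the paper: regularize, multiply by the localized entropy variable, integrate by parts, and pass to the limit. The paper differs only in technical organization. First, instead of your $\kappa$-truncation $(\rho_\varepsilon+\kappa)^{m-1}$, it folds the artificial viscosity into the entropy variable itself by setting $A_\varepsilon=\frac{m}{m-1}\rho_\varepsilon^{m-1}+\frac12|y|^2+\varepsilon e^{(\lambda-2)s}(\log\rho_\varepsilon+1)$, so that the $\varepsilon$-Laplacian pairs exactly with its own entropy piece and the resulting Fisher-information term is manifestly nonpositive. Second, it employs \emph{two} nested cut-offs $\eta_{R_1}<\eta_{R_2}$, taking $\tilde A_\varepsilon=A_\varepsilon\eta_{R_1}$ as the (compactly supported) test function and using $\eta_{R_2}$ only to localize the integration; the order of limits is then $R_2\to\infty$, $\varepsilon\to0$, $R_1\to\infty$, which lets the $\eta_{R_2}$ boundary terms vanish trivially (since $\tilde A_\varepsilon$ has fixed compact support) before any singular passage. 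Your single-cut-off route with the order $\varepsilon\to0$, $\kappa\to0$, $R\to\infty$ is equally valid and arguably cleaner; in particular your explicit invocation of the joint convexity of $(\rho,J)\mapsto|J|^2/\rho$ for the dissipation term is a point the paper's proof passes over rather quickly.
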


\begin{proof}  For $R_1<R_2$, let $\eta_{R_1}(y)$ and $\eta_{R_2}(y)$ be the smooth cut-off function defined by Lemma \ref{cut-off function} with $R=R_1$ and $R=R_2$, respectively. We set
\begin{align*}
	A_{\varepsilon}\equiv\frac{m}{m-1}\rho_{\varepsilon}^{m-1}+\frac{1}{2}\vert y\vert^{2}+\varepsilon e^{(\lambda-2)s}(\log\rho_{\varepsilon}+1),\\
	\tilde{A}_{\varepsilon}\equiv\frac{m}{m-1}\rho_{\varepsilon}^{m-1}\eta_{R_1}+\frac{1}{2}\vert y\vert^{2}\eta_{R_1}+\varepsilon e^{(\lambda-2)s}(\log\rho_{\varepsilon}+1)\eta_{R_1}.
\end{align*}	
Then \eqref{rho var equation} can be converted to
\begin{equation*}
	\partial_s\rho_{\varepsilon}=\text{div}_{y}(\rho_{\varepsilon}\nabla_{y}A_{\varepsilon})-e^{-\delta s}\rho_{\varepsilon}^p,
\end{equation*}
which, by multiplying by $\tilde{A}_{\varepsilon}\eta_{R_2}$ and integrating with respect to $y$ over $\mathbb{R}^n$,  can yield
\begin{equation}\label{rho var equation 2}
	\int\partial_s\rho_{\varepsilon }\tilde{A}_{\varepsilon}\eta_{R_2}dy=-\int\rho_{\varepsilon}\nabla_{y}A_{\varepsilon}\nabla_{y}(\tilde{A}_{\varepsilon}\eta_{R_2})dy-e^{-\delta s}\int\rho_{\varepsilon}^p\tilde{A}_{\varepsilon}\eta_{R_2}dy.
\end{equation}
Since
\begin{equation*}
	A_{\varepsilon}=A_{\varepsilon}(1-\eta_{R_1})+\tilde{A}_{\varepsilon},
\end{equation*}
the first term of the right hand side in \eqref{rho var equation 2} is
\begin{eqnarray} \label{A var 1}
&&-\int\rho_{\varepsilon}\nabla_{y}A_{\varepsilon}\nabla_{y}(\tilde{A}_{\varepsilon}\eta_{R_2})dy  \notag \\	 &&=-\int\rho_{\varepsilon}\nabla_{y}(A_{\varepsilon}(1-\eta_{R_1}))\nabla_{y}\tilde{A}_{\varepsilon}\eta_{R_2}dy-\int\rho_{\varepsilon}\vert\nabla_{y}\tilde{A}_{\varepsilon}\vert^2\eta_{R_2} dy  \notag \\
&&\ \ \ -\int\rho_{\varepsilon}\nabla_{y}A_{\varepsilon}\tilde{A}_{\varepsilon}\nabla_{y}\eta_{R_2}dy.
\end{eqnarray}
Moreover, the  first term of the right hand side in \eqref{A var 1} is
\begin{eqnarray}\label{A var 2}
&&-\int\rho_{\varepsilon}\nabla_{y}(A_{\varepsilon}(1-\eta_{R_1}))\nabla_{y}\tilde{A}_{\varepsilon}\eta_{R_2}dy \notag \\
&&=-\int\rho_{\varepsilon}\vert\nabla_{y}A_{\varepsilon}\vert^2\eta_{R_1}(1-\eta_{R_1})\eta_{R_2}dy \notag \\
&& \ \ \ -\frac{1}{2}\int\rho_{\varepsilon}\nabla_{y}\vert A_{\varepsilon}\vert^2\nabla_{y}\eta_{R_1}(1-\eta_{R_1})\eta_{R_2}dy \notag \\
&& \ \ \ +\int\rho_{\varepsilon}A_{\varepsilon}\nabla_{y}\tilde{A}_{\varepsilon}\nabla_{y}\eta_{R_1}\eta_{R_2}dy.
\end{eqnarray}
On the other hand, the left hand side in \eqref{rho var equation 2} is
\begin{eqnarray}\label{s derivative of E var}
&&\int\partial_s\rho_{\varepsilon}\tilde{A}_{\varepsilon}\eta_{R_2}dy \notag \\
&&=\frac{d}{ds}\int\left(\frac{1}{m-1}\rho_{\varepsilon}^{m}+\frac{1}{2}\vert y\vert^{2}\rho_{\varepsilon}+\varepsilon e^{(\lambda-2)s}\rho_{\varepsilon}\log\rho_{\varepsilon}\right)\eta_{R_1}\eta_{R_2} dy.
\end{eqnarray}
Substituting \eqref{A var 1}, \eqref{A var 2} and \eqref{s derivative of E var} into \eqref{rho var equation 2}, and integrating \eqref{rho var equation 2} in $s$, we obtain
\begin{eqnarray}\label{E estimate 1}
&&\int\left(\frac{1}{m-1}\rho_{\varepsilon}^{m}+\frac{1}{2}\vert y\vert^{2}\rho_{\varepsilon}+\varepsilon e^{(\lambda-2)s}\rho_{\varepsilon}\log\rho_{\varepsilon}\right)\eta_{R_1}\eta_{R_2} dy \notag \\
&&=\int\left(\frac{1}{m-1}\rho_{0\varepsilon}^{m}+\frac{1}{2}\vert y\vert^{2}\rho_{0\varepsilon}+\varepsilon e^{(\lambda-2)s}\rho_{0\varepsilon}\log\rho_{0\varepsilon}\right)\eta_{R_1}\eta_{R_2} dy \notag \\
&& \ \ \ -\int\limits_{0}^{s}e^{-\delta s}\int\rho_{\varepsilon}^p\tilde{A}_{\varepsilon}\eta_{R_2}dyd\tau
-\int\limits_{0}^{s}\int\rho_{\varepsilon}\vert\nabla_{y}A_{\varepsilon}\vert^2\eta_{R_1}(1-\eta_{R_1})\eta_{R_2}dy\tau \notag \\
&& \ \ \ -\frac{1}{2}\int\limits_{0}^{s}\int\rho_{\varepsilon}\nabla_{y}\vert A_{\varepsilon}\vert^2\nabla_{y}\eta_{R_1}(1-\eta_{R_1})\eta_{R_2}dyd\tau \notag \\
&&\ \ \ +\int\limits_{0}^{s}\int\rho_{\varepsilon}A_{\varepsilon}\nabla_{y}\tilde{A}_{\varepsilon}\nabla_{y}\eta_{R_1}\eta_{R_2}dyd\tau  \notag\\
&& \ \ \ -\int\limits_{0}^{s}\int\rho_{\varepsilon}\vert\nabla_{y}\tilde{A}_{\varepsilon}\vert^2\eta_{R_2} dyd\tau-\int\limits_{0}^{s}\int\rho_{\varepsilon}\nabla_{y}A_{\varepsilon}\tilde{A}_{\varepsilon}\nabla_{y}\eta_{R_2}dyd\tau,
\end{eqnarray}
which yields by letting $R_2\to\infty$ that
\begin{eqnarray}\label{E estimate 2}
&&\int\left(\frac{1}{m-1}\rho_{\varepsilon}^{m}+\frac{1}{2}\vert y\vert^{2}\rho_{\varepsilon}+\varepsilon e^{(\lambda-2)s}\rho_{\varepsilon}\log\rho_{\varepsilon}\right)\eta_{R_1} dy \notag\\
&&\leq \int\left(\frac{1}{m-1}\rho_{0\varepsilon}^{m}+\frac{1}{2}\vert y\vert^{2}\rho_{0\varepsilon}+\varepsilon e^{(\lambda-2)s}\rho_{0\varepsilon}\log\rho_{0\varepsilon}\right)\eta_{R_1} dy \notag \\
&& \ \ \ -\int\limits_{0}^{s}e^{-\delta s}\int\rho_{\varepsilon}^p\tilde{A}_{\varepsilon}dyd\tau-\int\limits_{0}^{s}\int\rho_{\varepsilon}\vert\nabla_{y}A_{\varepsilon}\vert^2\eta_{R_1}(1-\eta_{R_1})dyd\tau \notag \\
&& \ \ \ -\frac{1}{2}\int\limits_{0}^{s}\int\rho_{\varepsilon}\nabla_{y}\vert A_{\varepsilon}\vert^2\nabla_{y}\eta_{R_1}(1-\eta_{R_1})dyd\tau+\int\limits_{0}^{s}\int\rho_{\varepsilon}A_{\varepsilon}\nabla_{y}\tilde{A}_{\varepsilon}\nabla_{y}\eta_{R_1}dyd\tau \notag \\
&& \ \ \ -\int\limits_{0}^{s}\int\rho_{\varepsilon}\vert\nabla_{y}\tilde{A}_{\varepsilon}\vert^2 dyd\tau.
\end{eqnarray}
By Theorem \ref{local existence}, Lemma \ref{Generalized Shannon's inequality} and Proposition \ref{bound of the second moment}, we have
\begin{align*}
	&\rho_{\varepsilon}\to \rho,\quad && \mathrm{weakly\;star\; in\; }L^\infty(0,T;L^{m}(\mathbb{R}^n),&\\
	&\nabla \rho_{\varepsilon}^{m-\frac{1}{2}}\to \nabla \rho^{m-\frac{1}{2}},\quad &&\mathrm{weakly\; in\; }L^2(0,T;L^{2}(\mathbb{R}^n)),&
\end{align*}
which, combined with \eqref{stongly rho r norm} and \eqref{weakly nable rho m},  can yield by passing $\varepsilon\to0$ that
\begin{equation*}
	\begin{aligned}
		&\int\left(\frac{1}{m-1}\rho^{m}+\frac{1}{2}\vert y\vert^{2}\rho\right)\eta_{R_1} dy-\int\left(\frac{1}{m-1}\rho_{0}^{m}+\frac{1}{2}\vert y\vert^{2}\rho_{0}\right)\eta_{R_1} dy\\
		&\leq-\int\limits_{0}^{s}\int\rho\left\vert\nabla_{y}\left(\frac{m}{m-1}\rho_{\varepsilon}^{m-1}+\frac{1}{2}\vert y\vert^{2}\right)\right\vert^2\eta(1-\eta_{R_1})dyd\tau\\
		& \ \ \ -\int\limits_{0}^{s}e^{-\delta \tau}\int\rho^p\left(\frac{m}{m-1}\rho_{\varepsilon}^{m-1}\eta_{R_1}+\frac{1}{2}\vert y\vert^{2}\eta_{R_1}\right)dyd\tau\\
		& \ \ \ -\frac{1}{2}\int\limits_{0}^{s}\int\rho\nabla_{y}\left\vert \frac{m}{m-1}\rho_{\varepsilon}^{m-1}+\frac{1}{2}\vert y\vert^{2}\right\vert^2\nabla_{y}\eta_{R_1}(1-\eta_{R_1})dyd\tau\\
		&\ \ \ +\int\limits_{0}^{s}\int\rho \left(\frac{m}{m-1}\rho_{\varepsilon}^{m-1}+\frac{1}{2}\vert y\vert^{2}\right)\nabla_{y}\left(\frac{m}{m-1}\rho_{\varepsilon}^{m-1}\eta_{R_1}+\frac{1}{2}\vert y\vert^{2}\eta_{R_1}\right)\nabla_{y}\eta_{R_1} dyd\tau\\
		&\ \ \ -\int\limits_{0}^{s}\int\rho\left\vert\nabla_{y}\left(\frac{m}{m-1}\rho_{\varepsilon}^{m-1}\eta_{R_1}+\frac{1}{2}\vert y\vert^{2}\eta_{R_1}\right)\right\vert^2 dyd\tau,
	\end{aligned}
\end{equation*}
which yields \eqref{entropy estimate rho } for $R_1\to\infty$.
\end{proof}

\subsection{Properties of the relative entropy}
Then we will show two properties of the relative entropy:
\begin{equation}\label{relative entropy}
	E(f|g)=E(f(y))-E(g(y)).
\end{equation}
\begin{proposition}[\cite{upper bound of the relative entropy}] \label{upper bound of the relative entropy 1}Let $m>\frac{n-1}{n}$ and $m\neq1$. Then, for any non-negative function $f\in L_{2}^1(\mathbb{R}^n)$ such that $\nabla f^{m-\frac{1}{2}}\in L^2(\mathbb{R}^n)$ and $\Vert f \Vert_1=M$, it holds that
	\begin{equation}\label{upper bound of the relative entropy 2}
		E(f|\rho_{M(s)})\leq\frac{1}{2}\int f(y)\left\vert\frac{m}{m-1}\nabla f(y)^{m-1}+y\right\vert^{2}dy.
	\end{equation}	
\end{proposition}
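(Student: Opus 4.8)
The plan is to recognize the right-hand side of \eqref{upper bound of the relative entropy 2} as the relative Fisher information of $f$ with respect to $\rho_{M(s)}$ and to obtain the bound from the displacement convexity of the entropy functional $E$, the hypothesis $m>\frac{n-1}{n}$ being exactly McCann's convexity threshold in this regime. First I would record the two structural facts about $\rho_{M(s)}$ (defined in \eqref{Barenblatt solution rho M}) that drive the argument: it is everywhere positive and smooth, and, being the stationary profile of \eqref{Fokker Planck equation}, it satisfies
\[
\frac{m}{m-1}\nabla_y\rho_{M(s)}^{m-1}+y=0,\qquad\text{equivalently}\qquad \frac{m}{m-1}\rho_{M(s)}^{m-1}+\tfrac12|y|^{2}\equiv\text{const};
\]
moreover $\|\rho_{M(s)}\|_1=M(s)=\|f\|_1$. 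Using these together with the mass identity, one checks that $E(f|\rho_{M(s)})=E(f)-E(\rho_{M(s)})$ (recall \eqref{relative entropy}) equals
\[
\int\left[\frac{f^{m}-\rho_{M(s)}^{m}}{m-1}-\frac{m}{m-1}\rho_{M(s)}^{m-1}\bigl(f-\rho_{M(s)}\bigr)\right]dy=D(f|\rho_{M(s)})\ge0,
\]
the nonnegative quantity of Lemma \ref{CK inequality 1} (positivity of the integrand follows from convexity of $t\mapsto t^{m}/(m-1)$ on $(0,\infty)$ for $0<m<1$), while the first identity above rewrites the right-hand side of \eqref{upper bound of the relative entropy 2} as the relative Fisher information
\[
I(f|\rho_{M(s)}):=\int f\left|\nabla_y\Bigl(\tfrac{m}{m-1}f^{m-1}+\tfrac12|y|^{2}\Bigr)-\nabla_y\Bigl(\tfrac{m}{m-1}\rho_{M(s)}^{m-1}+\tfrac12|y|^{2}\Bigr)\right|^{2}dy .
\]
Thus \eqref{upper bound of the relative entropy 2} is precisely the ``entropy $\le\tfrac12$ Fisher information'' inequality $E(f|\rho_{M(s)})\le\tfrac12\,I(f|\rho_{M(s)})$.

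For the core estimate I would argue by displacement convexity. After a routine approximation reducing to $f$ smooth, compactly supported, bounded above and below on its support, and of mass $M(s)$ — the general case being recovered at the end by lower semicontinuity of $E(\cdot|\rho_{M(s)})$ and of the Fisher information together with dominated convergence for the second moments, which is where the hypotheses $f\in L_{2}^{1}(\mathbb{R}^n)$ and $\nabla f^{m-\frac12}\in L^{2}(\mathbb{R}^n)$ enter — let $\nabla\varphi$ be the Brenier map pushing $\rho_{M(s)}$ onto $f$ (it exists because both are absolutely continuous with finite second moment, finiteness for $\rho_{M(s)}$ being guaranteed by $m>\frac{n-1}{n}\ge\frac{n}{n+2}$), and let $\rho_t:=\bigl((1-t)\,\mathrm{Id}+t\nabla\varphi\bigr)_{\#}\rho_{M(s)}$, $t\in[0,1]$, be the displacement interpolation: a constant-speed geodesic for the quadratic Wasserstein distance $W_2$, joining $\rho_{M(s)}$ to $f$, whose velocity field $v_t$ satisfies $\int\rho_t|v_t|^{2}dy=W_2(\rho_{M(s)},f)^{2}$. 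The internal energy $\frac{1}{m-1}\int\rho^{m}$ is displacement convex by McCann's theorem, which applies exactly because $m\ge 1-\frac1n=\frac{n-1}{n}$, while the confining term $\frac12\int|y|^{2}\rho$ is $1$-uniformly displacement convex; hence $t\mapsto E(\rho_t)$ is $1$-convex on $[0,1]$, and comparing its endpoints,
\[
E(\rho_{M(s)})=E(\rho_0)\ \ge\ E(\rho_1)-\Bigl(\tfrac{d}{dt}E(\rho_t)\Bigr)\Big|_{t=1^{-}}+\tfrac12\,W_2(\rho_{M(s)},f)^{2}.
\]

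By the first variation formula $\bigl(\tfrac{d}{dt}E(\rho_t)\bigr)\big|_{t=1^{-}}=\int f\,\nabla_y\bigl(\tfrac{m}{m-1}f^{m-1}+\tfrac12|y|^{2}\bigr)\cdot v_1\,dy$, so that Cauchy--Schwarz combined with $\int f|v_1|^{2}dy=W_2(\rho_{M(s)},f)^{2}$ gives $\bigl(\tfrac{d}{dt}E(\rho_t)\bigr)\big|_{t=1^{-}}\le\sqrt{I(f|\rho_{M(s)})}\;W_2(\rho_{M(s)},f)$. Since $E(f|\rho_{M(s)})=E(\rho_1)-E(\rho_0)$, rearranging the previous display and then applying Young's inequality $ab-\tfrac12 b^{2}\le\tfrac12 a^{2}$ with $a=\sqrt{I(f|\rho_{M(s)})}$ and $b=W_2(\rho_{M(s)},f)$ yields
\[
E(f|\rho_{M(s)})\ \le\ \sqrt{I(f|\rho_{M(s)})}\;W_2(\rho_{M(s)},f)-\tfrac12\,W_2(\rho_{M(s)},f)^{2}\ \le\ \tfrac12\,I(f|\rho_{M(s)}),
\]
and passing to the limit along the approximation gives \eqref{upper bound of the relative entropy 2}.

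The step I expect to be the real work is the rigorous differential calculus along the displacement interpolation for the singular fast-diffusion nonlinearity $t\mapsto t^{m}/(m-1)$: justifying the first-variation formula, the constant-speed identity, and the legitimacy of Cauchy--Schwarz at $t=1$ all require controlling $\rho_t$ near $\{f=0\}$ and at infinity, which is precisely why the approximation step and the hypotheses $f\in L_{2}^{1}(\mathbb{R}^n)$, $\nabla f^{m-\frac12}\in L^{2}(\mathbb{R}^n)$ are imposed. Should one wish to bypass optimal transport altogether, \eqref{upper bound of the relative entropy 2} can alternatively be deduced from the sharp Del Pino--Dolbeault Gagliardo--Nirenberg--Sobolev inequality, or established by a Bakry--\'{E}mery-type computation along the confined fast-diffusion flow $\rho_\tau=\mathrm{div}_y(\rho\,y+\nabla_y\rho^{m})$ issued from $f$; in each route the condition $m>\frac{n-1}{n}$ is again exactly what makes the underlying convexity (respectively second-order) inequality valid.
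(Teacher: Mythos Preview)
Your argument is correct, but note that the paper does not actually prove this proposition: it is stated with a citation to Del~Pino--Dolbeault and used as a black box. The cited reference obtains \eqref{upper bound of the relative entropy 2} as a reformulation of the sharp Gagliardo--Nirenberg--Sobolev inequality (the route you mention at the end as an alternative), identifying the Barenblatt profile $\rho_{M(s)}$ as the extremal. Your main line of argument goes instead through McCann's displacement convexity and the $1$-uniform convexity of the quadratic potential along Wasserstein geodesics, which is the Otto/optimal-transport proof of the same inequality. Both routes hinge on the same threshold $m>\frac{n-1}{n}$: for Del~Pino--Dolbeault it is the range where the relevant Gagliardo--Nirenberg inequality is available with the Barenblatt as optimizer, while for you it is precisely McCann's condition for displacement convexity of $\int\rho^{m}/(m-1)$. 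Your approach has the advantage of making the constant $\tfrac12$ and the role of the confinement transparent (it is the modulus of convexity of $\tfrac12|y|^{2}$), at the cost of the regularity and approximation bookkeeping you flag; the Del~Pino--Dolbeault route packages all of that into a single functional inequality whose extremals are known. Either way the outcome is exactly \eqref{upper bound of the relative entropy 2}, so your proposal is a genuine and complete substitute for the citation.
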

It is clear that Proposition \ref{upper bound of the relative entropy 1} implies the upper bound of the relative entropy. Moreover, we consider the lower bound of the relative entropy by Lemma \ref{CK inequality 1} with $f=\rho$ and $g=\rho_{M(s)}$. Since it holds by the decreasing of $M(s)$ and the increasing of $\theta_{M(s)}$ that
\begin{equation*}
	\Vert\rho_{M(s)}\Vert_{2-m}\leq \Vert\rho_{M(s)}\Vert_{\infty}^{1-m}\Vert\rho_{M(s)}\Vert_{1}\leq\frac{\gamma}{\theta_{M(0)}}M(0),
\end{equation*}
for $\frac{n-2}{n}<m<1$, we have $\rho_{M(s)}\in L^{2-m}(\mathbb{R}^n)$. In addition,
\begin{equation*}
	\frac{1}{2}\vert y\vert^{2}= \frac{m}{1-m}\left(\rho_{M(s)}^{m-1}-\frac{\theta_{M(s)}}{\gamma}\right).
\end{equation*}
Thus, we have
\begin{align*}
	D(\rho|\rho_{M(s)})&=\frac{1}{m-1}\int \rho^{m}dy-\frac{1}{m-1}\int \rho_{M(s)}^{m}dy \\
& \ \ \ -\frac{m}{m-1}\int \rho_{M(s)}^{m-1}(\rho-\rho_{M(s)})dy\\
	&=\frac{1}{m-1}\int \rho^{m}dy-\frac{1}{m-1}\int \rho_{M(s)}^{m}dy+\frac{1}{2}\int\vert y\vert^{2}(\rho-\rho_{M(s)})dy\\
	&=E(\rho|\rho_{M(s)}),
\end{align*}
which yields by Csisz$\mathrm{\acute{a}}$r-Kullback inequality \eqref{CK inequality 2} that
\begin{equation}\label{CK inequality 3}
	\Vert \rho-\rho_{M(s)} \Vert_{1}\leq C\sqrt{E(\rho|\rho_{M(s)})},
\end{equation}
where
\begin{align*}
	C&=M(s)^{\frac{m-2}{2}}\left(\frac{2}{m}\int \rho_{M(s)}^{2-m}dx\right)^{\frac{1}{2}}\\
	&\leq \left(M(0)e^{-C(\Vert u_0 \Vert_{1},\Vert u_0 \Vert_{\infty},n,p,m)\int_{0}^{\infty}e^{-\delta s}ds}\right)^{\frac{m-2}{2}}\left(\frac{2}{m}\int \rho_{M(s)}^{2-m}dx\right)^{\frac{1}{2}}\\
	&<\infty,
\end{align*}
since \eqref{M > 0} and  $\frac{n-2}{n}<\frac{n-1}{n}<m<1$.

\subsection{Asymptotic convergence}
Let us consider
\begin{eqnarray}\label{E rho M 1}
\frac{d}{ds}E(\rho_{M(s)})&=&\frac{d}{ds}\int \rho_{M(s)}\left(\frac{1}{m-1}\rho_{M(s)}^{m-1}+\frac{1}{2}\vert y\vert^{2}\right)dy \notag\\
		&=&\int\partial_{s}\rho_{M(s)}\left(\frac{m}{m-1}\rho_{M(s)}^{m-1}+\frac{1}{2}\vert y\vert^{2}\right)dy.
\end{eqnarray}
Note that
\begin{equation}\label{E rho M 2}
	\frac{m}{m-1}\rho_{M(s)}^{m-1}+\frac{1}{2}\vert y\vert^{2}=-\frac{1}{2}\theta_{M(s)},
\end{equation}
then we have, from \eqref{E rho M 1} and \eqref{E rho M 2}, that
\begin{eqnarray}\label{E rho M 3}
		\frac{d}{ds}E(\rho_{M(s)})&=&-\frac{1}{2}\theta_{M(s)}\int\partial_{s}\rho_{M(s)}dy \notag\\
		&=&-\frac{1}{2}\theta_{M(s)}\frac{d}{ds}\int\rho_{M(s)}dy \notag\\
		&=&-\frac{1}{2}\theta_{M(s)}\frac{d}{ds}M(s) \notag\\
		&=&\frac{1}{2}\theta_{M(s)}e^{-\delta s}\int\rho^p dy \notag\\
		&=&-e^{-\delta s}\int\rho^p \left(\frac{m}{m-1}\rho_{M(s)}^{m-1}+\frac{1}{2}\vert y\vert^{2}\right)dy.
\end{eqnarray}
On the other hand, by virtue of \eqref{entropy estimate rho }, we have
\begin{eqnarray}\label{E 1}
		\frac{d}{ds}E(\rho)&\leq &-\int\rho\left\vert\nabla\left(\frac{m}{m-1}\rho^{m-1}+\frac{1}{2}\vert y\vert^{2}\right)\right\vert^{2}dy \notag\\
		&& -e^{-\delta s}\int\rho^{p}\left(\frac{m}{m-1}\rho^{m-1}+\frac{1}{2}\vert y\vert^{2}\right)dy.
\end{eqnarray}

Combining \eqref{E rho M 3} and \eqref{E 1}, we obtain by Mean-Value Theorem that
\begin{eqnarray}\label{E E rho M 1}
&&\frac{d}{ds}E(\rho|\rho_{M(s)}) \notag\\
&&	\leq-\int\rho\left\vert\nabla\left(\frac{m}{m-1}\rho^{m-1}+\frac{1}{2}\vert y\vert^{2}\right)\right\vert^{2}dy \notag \\
&& \ \ \ -\frac{m}{m-1}e^{-\delta s}\int\rho^{p}\left(\rho^{m-1}-\rho_{M(s)}^{m-1}\right)dy  \notag\\
&&\leq -\int\rho\left\vert\nabla\left(\frac{m}{m-1}\rho^{m-1}+\frac{1}{2}\vert y\vert^{2}\right)\right\vert^{2}dy \notag \\
&& \ \ \ -me^{-\delta s}\int\rho^{p+m-2}\left(\rho-\rho_{M(s)}\right)dy \notag \\
&&	\leq-\int\rho\left\vert\nabla\left(\frac{m}{m-1}\rho^{m-1}+\frac{1}{2}\vert y\vert^{2}\right)\right\vert^{2}dy \notag \\
&& \ \ \ +me^{-\delta s}\Vert\rho\Vert_{\infty}^{p+m-2}\Vert\rho-\rho_{M(s)}\Vert_{1},
\end{eqnarray}
which implies by \eqref{upper bound of the relative entropy 2}, \eqref{CK inequality 3} and Corollary \ref{decay estimate of rho} that
\begin{equation}\label{E E rho M 2}
	\begin{aligned}
		&\frac{d}{ds}E(\rho|\rho_{M(s)})\leq-2E(\rho|\rho_{M(s)})+Ce^{-\delta s}\sqrt{E(\rho|\rho_{M(s)})},
	\end{aligned}
\end{equation}
where $C=C(\Vert u_0 \Vert_{1},\Vert u_0 \Vert_{\infty},n,p,m)$.
If the relative entropy $E(\rho|\rho_{M(s)})$ is strictly positive on any time interval $(s_1,s_2)$, then
\begin{equation}\label{E E rho M 3}
	\begin{aligned}
		&\frac{d}{ds}E(\rho|\rho_{M(s)})^{\frac{1}{2}}\leq-E(\rho|\rho_{M(s)})^{\frac{1}{2}}+Ce^{-\delta s},
	\end{aligned}
\end{equation}
which yields
\begin{equation}\label{E E rho M 4}
	\begin{aligned}
		&\frac{d}{ds}\left(e^{s}E(\rho|\rho_{M(s)})^{\frac{1}{2}}\right)\leq Ce^{(1-\delta )s}
	\end{aligned}
\end{equation}
for $s\in(s_1,s_2)$. By choosing $s_1$ as small as possible, we have either $s_1=0$ or $E(\rho(s_1)|\rho_{M(s_1)})=0$, and then we get the following
exponential convergence for any time $s>0$
\begin{equation}\label{E E rho M 5}
	\begin{aligned}
		&E(\rho|\rho_{M(s)})\leq Ce^{-2\min{\{1,\delta\}}s},\quad \mathrm{for\; } \delta\neq1,
	\end{aligned}
\end{equation}
and
\begin{equation}\label{E E rho M 6}
	\begin{aligned}
		&E(\rho|\rho_{M(s)})\leq C(1+s)^2e^{-2s},\quad \mathrm{for\; } \delta=1.
	\end{aligned}
\end{equation}
By the Csisz$\mathrm{\acute{a}}$r-Kullback inequality \eqref{CK inequality 3}, we have
\begin{equation*}
	\Vert \rho-\rho_{M(s)}\Vert_{1}\leq Ce^{-\min{\{1,\delta\}}s},\quad \mathrm{for\; } \delta\neq1,
\end{equation*}
and
\begin{equation*}
	\Vert \rho-\rho_{M(s)}\Vert_{1}\leq C(1+s)e^{-s},\quad \mathrm{for\; } \delta=1.
\end{equation*}
Going back to the original variables, it follows that
\begin{equation*}
	\Vert u-U_{\widetilde{M}(t)}\Vert_{1}\leq C(1+\lambda t)^{-\frac{1}{\lambda}\min{\{1,\delta\}}},\quad \mathrm{for\; } \delta\neq1,
\end{equation*}
and
\begin{equation*}
	\Vert u-U_{\widetilde{M}(t)}\Vert_{1}\leq C(1+\lambda t)^{-\frac{1}{\lambda}}\ln{(1+\lambda t)},\quad \mathrm{for\; } \delta=1,
\end{equation*}
where $C=C(\Vert u_0 \Vert_{1},\Vert u_0 \Vert_{\infty},n,p,m)$. Moreover, for $1<r<\infty$, we have
\begin{equation*}
	\begin{aligned}
		\Vert u-U_{\widetilde{M}(t)}\Vert_{r}=&e^{-\frac{n(r-1)}{r}s}\left(\int\vert \rho-\rho_{M(s)}\vert^{r}dy\right)^{\frac{1}{r}}\\
		\leq&e^{-\frac{n(r-1)}{r}s}\Vert \rho-\rho_{M(s)}\Vert_{\infty}^{\frac{r-1}{r}}\Vert \rho-\rho_{M(s)}\Vert_{1}^{\frac{1}{r}}\\
		\leq&C (1+\lambda t)^{-\frac{n(r-1)+\min{\{1,\delta\}}}{\lambda r}},\quad \mathrm{for\; } \delta\neq1,
	\end{aligned}
\end{equation*}
and
\begin{equation*}
	\begin{aligned}
		\Vert u-U_{\widetilde{M}(t)}\Vert_{r}\leq C (1+\lambda t)^{-\frac{n(r-1)+1}{\lambda r}}\ln{(1+\lambda t)},\quad \mathrm{for\; } \delta=1,
	\end{aligned}
\end{equation*}
where $C=C(\Vert u_0 \Vert_{1},\Vert u_0 \Vert_{\infty},n,p,m,r)$.

\section*{Acknowledgements} The work was done when C. Xie visited McGill University as a visiting scholar trainee supported by
the Postdoctoral Program of Guangdong Province International Training Program for Outstanding Young Talents. She would like to express her
sincere thanks for the hospitality of McGill University and Guangdong Province, China. The research of S. Fang was supported by the National Natural Science Foundation of China (No.11271141).  The research of M. Mei was supported by NSERC Grant RGPIN-2022-03374.
The research of Y. Qin  was supported by the TianYuan Special Funds of the NNSF of China with contract number 12226403, the NNSF of China with contract No.12171082, the fundamental research funds for the central universities with contract numbers 2232022G-13, 2232023G-13 and by a grant from Science and Technology Commission of Shanghai.





\end{document}